\documentclass{amsart}
\usepackage{amsmath,amsthm,amssymb}
\usepackage{mathtools}
\usepackage{cases}
\usepackage{enumerate}
\usepackage{hyperref}

\allowdisplaybreaks[3]
\numberwithin{equation}{section}
\setlength{\textheight}{220mm} \setlength{\textwidth}{155mm}
\setlength{\oddsidemargin}{1.25mm}
\setlength{\evensidemargin}{1.25mm} \setlength{\topmargin}{0mm}

\DeclarePairedDelimiter{\abs}{\lvert}{\rvert} 

\theoremstyle{plain} 
\newtheorem{theorem}{Theorem}[section]

\newtheorem{lemma}{Lemma}[section]
\newtheorem{cor}{Corollary}[section]

\begin{document}
\title[Apostol-Vu double zeta-function]{Mean value theorems for the Apostol-Vu double zeta-function and its application}
\author[Y.~Toma]{Yuichiro Toma}
\date{} 
\address{Draduate School of Mathematics, Nagoya University \\ Chikusa-ku, Nagoya 464-8602, Japan}
\email{m20034y@math.nagoya-u.ac.jp}

\makeatletter
\@namedef{subjclassname@2020}{\textup{2020} Mathematics Subject Classification}
\makeatother

\subjclass[2020]{11M32, 11M06}
\keywords{Apostol-Vu double zeta-function, Mean value theorem, Mellin-Barnes integral formula}

\maketitle

\begin{abstract}
Mean value theorems for various double zeta-functions have been studied. In this paper, we consider mean square values of the Apostol-Vu type. In addition, we also obtain mean square value of the Mordell-Tornheim type in the region where the mean value formulas have not been obtained before.
\end{abstract}

\section{Introduction and the statement of results}
Let $s_j =\sigma_j+it_j$ ($j=1,2,3$) be complex variables with $\sigma_j, t_j \in \mathbb{R}$. The Apostol-Vu double zeta-function is defined by 
\begin{equation}
\label{av,2}
\zeta_{AV,2} (s_1,s_2,s_3) = \sum_{m=1}^\infty \sum_{n<m} \frac{1}{m^{s_1}n^{s_2}(m+n)^{s_3}}
\end{equation}
in the region $\sigma_j>1$, by Matsumoto \cite{Ma}. It can be continued meromorphically to the whole $\mathbb{C}^3$-space, and its singularities are $s_1+s_3=1-\ell$ or $s_1+s_2+s_3=2-\ell$ for $\ell \in \mathbb{N}_{\geq 0 }$ ({\cite[Theorem 2]{Ma}}). 

The main purpose of this paper is to show mean value theorems for the above double zeta-function. The mean value theorems for the Riemann zeta-function have been studied by many mathematicians with a motivation to study the relation to the Lindel\"{o}f hypothesis. 

In recent years, Matsumoto and Tsumura \cite{MaTsu} first considered the mean value theorems for the Euler-Zagier double zeta-function which is defined by
\[
\zeta_{EZ,2} (s_1,s_2) = \sum_{m=1}^\infty \sum_{n=1}^\infty \frac{1}{m^{s_1}(m+n)^{s_2}}.
\]
In \cite{MaTsu}, they showed some asymptotic formulas for $\int_2^T \abs{ \zeta_{EZ,2} (s_1,s_2) }^2 d t_2$ with a fixed complex number $s_1$ and any large positive number $T>2$. After \cite{MaTsu}, various mean values of the Euler-Zagier double zeta-function have been proved (e.g. \cite{IMaNa}, \cite{KiMi}, \cite{IKiMa1}, \cite{IKiMa2}, \cite{Ki} and \cite{BMT}).

As for other multiple zeta-functions, Okamoto and Onozuka \cite{OO} considered the Mordell-Tornheim type which is defined by 
\begin{equation*}
\label{FE}
\zeta_{MT,2} (s_1,s_2,s_3) = \sum_{m=1}^\infty \sum_{n=1}^\infty \frac{1}{m^{s_1}n^{s_2}(m+n)^{s_3}}.
\end{equation*}
This series is absolutely convergent when $\sigma_1+\sigma_3>1, \sigma_2+\sigma_3>1$ and $\sigma_1+\sigma_2+\sigma_3>2$ ({\cite[Theorem 2.2]{OO}}). In their paper, they defined the function
\begin{equation*}
\zeta_{MT,2}^{[2]} (s_1,s_2,s_3) = \sum_{k=2}^\infty \abs*{ \sum_{m=1}^{k-1} \frac{1}{m^{s_1} (k-m)^{s_2}} }^2 \frac{1}{k^{s_3}}
\end{equation*}
which is absolutely convergent when $2\sigma_1+\sigma_3>1, 2\sigma_2+\sigma_3>1$ and $2\sigma_1+2\sigma_2+\sigma_3>2$ ({\cite[Theorem 2.2]{OO}}), and then they showed that
\[
\int_2^T \abs{ \zeta_{MT,2}(s_1,s_2,s_3)}^2 dt_3 = \zeta_{MT,2}^{[2]} (s_1,s_2,2\sigma_3) T + o(T)
\]
with complex numbers $s_1$ and $s_2$, where ($s_1,s_2,s_3$) are in a domain $\mathcal{D}$ which is defined by
\begin{align*}
\mathcal{D} &= \left\{ (s_1,s_2,s_3) \mid \sigma_1+\sigma_3 >1, \sigma_2+\sigma_3 >1 \text{\ and\ } \sigma_1+\sigma_2+\sigma_3>2 \right\} \\
& \cup \left\{ (s_1,s_2,s_3) \mid \sigma_1>1, \sigma_2 \geq 0, \sigma_3>0, t_2 \geq 0, 1/2<\sigma_2+\sigma_3 \leq 1, \right. \\
&\quad \left. \sigma_1+\sigma_2+\sigma_3>2, s_2+s_3 \neq 1 \text{\ and\ } t_3 \geq 2 \right\} \\
& \cup \left\{ (s_1,s_2,s_3) \mid 1/2 < \sigma_1<3/2, \sigma_2 \geq 0, \sigma_3>0, t_2 \geq 0, \sigma_1+\sigma_3 > 1, \right. \\
&\quad \left. 1/2 <\sigma_2+\sigma_3 \leq 1, 3/2 <\sigma_1+\sigma_2+\sigma_3\leq 2, s_2+s_3 \neq 1, \right. \\
&\quad \left. s_1+s_2+s_3 \neq2 \text{\ and\ } t_3 \geq 2 \right\}.
\end{align*}

In the present paper, we consider the properties of $\zeta_{AV,2}(s_1,s_2,s_3)$ with respect to $s_3$, while $s_1,s_2$ are to be fixed. Also we define the function by
\begin{equation}
\label{av,2^[2]}
\zeta_{AV,2}^{[2]} (s_1,s_2,s_3) = \sum_{k=3}^\infty \abs*{ \sum_{k/2 <m \leq k-1} \frac{1}{m^{s_1}(k-m)^{s_2}} }^2 \frac{1}{k^{s_3}}.
\end{equation}
Under these conditions, we will show the following theorems.

\begin{theorem}
\label{thm:first thm}
Let $s_j=\sigma_j+it_j \in \mathbb{C}$ ($j=1,2,3$) with $\sigma_1+\sigma_3>1$ and $\sigma_1+\sigma_2+\sigma_3>2$. Then we have
\begin{equation*}
\int_2^T \abs*{ \zeta_{AV,2} (s_1,s_2,s_3) }^2 dt_3 = \zeta_{AV,2}^{[2]} (s_1,s_2,2\sigma_3) T + O(1) \quad (T \to \infty),
\end{equation*}
where the implicit constant depends on $s_1,s_2$ and $\sigma_3$.
\end{theorem}

As discussed later in Section \ref{sec2}, we see that the region mentioned in the statement of the theorem is the region of absolute convergence for $\zeta_{AV,2} (s_1,s_2,s_3)$. Although the above formula is easily shown,  we further prove the following mean square formulas for $\zeta_{AV,2} (s_1,s_2,s_3)$ in the meromorphically continued regions. 

\begin{theorem}
\label{thm:second thm}
Let $s_j=\sigma_j+it_j \in \mathbb{C}$ ($j=1,2,3$) with $\sigma_1\geq 0,\sigma_3>0, \frac{1}{2} < \sigma_1+\sigma_3\leq1$, $\sigma_1+\sigma_2+\sigma_3>2, t_1 \geq 0$ and $t_3 \geq 2$. 
Then we have
\begin{align*}
&\int_2^T \abs*{ \zeta_{AV,2} (s_1,s_2,s_3) }^2 dt_3 \\
&= \zeta_{AV,2}^{[2]} (s_1,s_2,2\sigma_3) T + \begin{cases}
O ( T^{2-2\sigma_1-2\sigma_3} \log T ) & (\frac{1}{2} < \sigma_1+\sigma_3 \leq \frac{3}{4}) \\
O ( T^\frac{1}{2} ) & (\frac{3}{4} < \sigma_1+\sigma_3 \leq 1), \\
\end{cases}
\end{align*}
where implicit constants depend on $s_1,s_2$ and $\sigma_3$.
\end{theorem}

\begin{theorem}
\label{thm:third thm}
Let $s_j=\sigma_j+it_j \in \mathbb{C}$ ($j=1,2,3$) with $\sigma_1\geq 0, t_1 \geq 0, \sigma_3>0, t_3 \geq 2, \sigma_1+\sigma_3>\frac{1}{2}$ and $2 \geq \sigma_1+\sigma_2+\sigma_3>\frac{3}{2}$. Assume that when $t_3$ moves from $2$ to $T$, the point $(s_1,s_2,s_3)$ does not encounter the hyperplane 
$s_1+s_2+s_3=2$. Then we have
\begin{align*}
&\int_2^T \abs*{ \zeta_{AV,2} (s_1,s_2,s_3) }^2 dt_3 \\
&= \zeta_{AV,2}^{[2]} (s_1,s_2,2\sigma_3) T \\
&+ \begin{cases}
O ( T^{2-2\sigma_1-2\sigma_3} \log T ) & (\sigma_2 \geq \frac{1}{2}+\sigma_1+\sigma_3) \\
O ( T^{\frac{5}{2}-\sigma_1-\sigma_2-\sigma_3} ) & (\sigma_2 < \frac{1}{2}+\sigma_1+\sigma_3, \frac{3}{2} < \sigma_1+\sigma_2+\sigma_3<2) \\
O ( (T \log T)^\frac{1}{2} ) & (\sigma_2 < \frac{1}{2}+\sigma_1+\sigma_3, \sigma_1+\sigma_2+\sigma_3=2), \\
\end{cases}
\end{align*}
where implicit constants depend on $s_1,s_2$ and $\sigma_3$.
\end{theorem}

Furthermore, by using Theorem \ref{thm:approximation2} in Section \ref{sec2} and the following simple relation ({\cite[(5.6)]{Ma}}):
\begin{equation}
\label{functional relation}
\zeta_{MT,2} (s_1,s_2,s_3) = 2^{-s_3} \zeta(s_1+s_2+s_3)+\zeta_{AV,2} (s_1,s_2,s_3)+\zeta_{AV,2} (s_2,s_1,s_3),
\end{equation}
we can easily obtain an aproximation formula for the Mordell-Tornheim type. As a result, we can prove a mean square formula for the Mordell-Tornheim double zeta-function in a domain $\mathcal{D}^\prime$ which is defined by 
\begin{align*}
\mathcal{D}^\prime &= \left\{ (s_1,s_2,s_3) \mid \sigma_1 \geq 0, \sigma_2 \geq 0, \sigma_3 > 0, t_1 \geq 0, t_2 \geq 0, 2 \leq t_3 \leq T, \right. \\
&\qquad \left. \sigma_1+\sigma_3 \leq 1, \sigma_2+\sigma_3 \leq 1, \sigma_1+\sigma_2+\sigma_3> 3/2, \right. \\
&\qquad \left. s_1+s_3 \neq 1, s_2+s_3 \neq 1, s_1+s_2+s_3 \neq 2 \right\}.
\end{align*}

Remark that mean square formulas in $\mathcal{D}^\prime$ have not been given in \cite{OO}. 

\begin{theorem}
\label{thm:forth thm}
Let $s_j=\sigma_j+it_j $  ($j=1,2,3$) are in the domain $\mathcal{D}^\prime$. 
Then we have
\begin{align*}
&\int_2^T \abs*{ \zeta_{MT,2} (s_1,s_2,s_3) }^2 dt_3 \\
&= \zeta_{MT,2}^{[2]} (s_1,s_2,2\sigma_3) T + \begin{cases}
O ( T^{\frac{5}{2}-\sigma_1-\sigma_2-\sigma_3} ) & (\frac{3}{2}<\sigma_1+\sigma_2+\sigma_3<2) \\
O ( (T \log T)^\frac{1}{2} ) & (\sigma_1+\sigma_2+\sigma_3=2) \\
\end{cases}
\end{align*}
where implicit constants depend on $s_1,s_2$ and $\sigma_3$.
\end{theorem}

The method of the present parer is based on that of Matsumoto and Tsumura \cite{MaTsu}, and also of Okamoto and Onozuka \cite{OO}. In particular, some details of the proof are analogous to the argument in \cite{OO}, further Theorems \ref{thm:first thm}, \ref{thm:second thm} and \ref{thm:third thm} of this paper correspond to Theorems 3.1, 3.2 and 3.3 of \cite{OO}, respectively. However, the method of \cite{OO} is not applicable to the Apostol-Vu type as it is. Therefore in Section \ref{sec2}, we have to prove Lemma \ref{lem:integral deribvative} in order to apply their method to the Apostol-Vu type. 

Moreover, in the proof of Theorem \ref{thm:forth thm}, we calculate mean square values of $\zeta_{MT,2}$ in a way different from \cite{OO}. We estimate the term $S_1$ (\ref{S_1}) that is different from $D_1$ in \cite{OO}. We adopt the method of Miyagawa \cite{Mi} which is applied to Barnes double zeta-functions.

\section{Preliminaries}\label{sec2}
In this section, we consider the properties of the Apostol-Vu double zeta-function. Firstly, we give the region of absolute convergence for $\zeta_{AV,2} (s_1,s_2,s_3)$ and $\zeta_{AV,2}^{[2]} (s_1,s_2,s_3)$.

\begin{theorem}
\label{thm:series conv}
The series (\ref{av,2}) is absolutely convegent when $\sigma_1+\sigma_3>1$, and $\sigma_1+\sigma_2+\sigma_3>2$.  Also the series (\ref{av,2^[2]})  is absolutely convegent when $2\sigma_1+\sigma_3>1$ and $2\sigma_1+2\sigma_2+\sigma_3>3$.
\end{theorem}
\begin{proof}
Let $s_j =\sigma_j+it_j \in \mathbb{C}$ ($j=1,2,3$). We have
\begin{align*}
\sum_{m=1}^\infty \sum_{n<m} \abs*{ \frac{1}{m^{s_1}n^{s_2}(m+n)^{s_3}} }
&\ll \sum_{m=1}^\infty \frac{1}{m^{\sigma_1}} \sum_{n<m} \frac{1}{n^{\sigma_2}(m+n)^{\sigma_3}} \\
&\ll \sum_{m=1}^\infty \frac{1}{m^{\sigma_1}} \sum_{n<m} \frac{1}{n^{\sigma_2} m^{\sigma_3}} \\
&\ll \sum_{m=1}^\infty \frac{1}{m^{\sigma_1+\sigma_3}} \times
\begin{cases}
1 & (\sigma_2>1) \\
\log m & (\sigma_2=1) \\
m^{1-\sigma_2} & (\sigma_2<1) \\
\end{cases} \\
&\ll \sum_{m=1}^\infty \begin{cases}
m^{-\sigma_1-\sigma_3} & (\sigma_2>1) \\
m^{-\sigma_1-\sigma_3} \log m & (\sigma_2=1) \\
m^{1-\sigma_1-\sigma_2-\sigma_3} & (\sigma_2<1). \\
\end{cases}
\end{align*}
Hence the series (\ref{av,2}) is absolutely convergent when $\sigma_1+\sigma_3>1$ and $\sigma_1+\sigma_2+\sigma_3>2$. Similarly we have
\begin{align*}
\abs*{ \sum_{k/2 <m \leq k-1} \frac{1}{m^{s_1}(k-m)^{s_2}} } &\ll \sum_{k/2 <m \leq k-1} \frac{1}{m^{\sigma_1}(k-m)^{\sigma_2}} \\
&\ll k^{-\sigma_1} \sum_{k/2 <m \leq k-1} \frac{1}{(k-m)^{\sigma_2}} \\
&\ll k^{-\sigma_1} \times
\begin{cases}
1 & (\sigma_2>1) \\
\log k & (\sigma_2=1) \\
k^{1-\sigma_2} & (\sigma_2<1).
\end{cases}
\end{align*}
Therefore 
\begin{align*}
\zeta_{AV,2}^{[2]} (s_1,s_2,s_3) &\ll \sum_{k=3}^\infty
\begin{cases}
k^{-2\sigma_1-\sigma_3} & (\sigma_2>1) \\
k^{-2\sigma_1-\sigma_3} (\log k)^2 & (\sigma_2=1) \\
k^{2-2\sigma_1-2\sigma_2-\sigma_3} & (\sigma_2<1), \\
\end{cases}
\end{align*}
and hence $\zeta_{AV,2}^{[2]} (s_1,s_2,s_3)$ is absolutely convergent when $2\sigma_1+\sigma_3>1$ and $2\sigma_1+2\sigma_2+\sigma_3>3$. This completes the proof of Theorem \ref{thm:series conv}. 
\end{proof}

\subsection{The first approximation formula}
Next, we show the first approximation formula for the Apostol-Vu double zeta-function which is necessary to prove Theorem \ref{thm:second thm}.

\begin{theorem}
\label{thm:approximation1}
Let $s_j=\sigma_j+it_j $ ($j=1,2,3$) be complex variables with $s_1+s_3 \neq 1$ and $\sigma_1 \geq 0$, and let $y \geq x \geq 1$ and $\kappa > 1$. Suppose $\sigma_3 > \max \{0, 2-\sigma_1-\sigma_2 \}$, $\abs{t_3} \leq 2\pi x/\kappa -\abs{t_1}$
and $(t_1,t_3) \in \{ (t_1,t_3) \mid t_1, t_3 \geq 0, (t_1,t_3) \neq (0,0) \} \cup \{ (t_1,t_3) \mid  t_1, t_3 \leq 0, (t_1,t_3) \neq (0,0) \}$. Then we have
\begin{align}
\label{formula:first appoximation}
&\zeta_{AV,2} (s_1,s_2,s_3) =\sum_{n \leq x} \sum_{n<m\leq y} \frac{1}{m^{s_1}n^{s_2}(m+n)^{s_3}} \\
&+\frac{y^{1-s_1}}{s_1+s_3-1} \sum_{n \leq x} \frac{1}{n^{s_2}(y+n)^{s_3}} +\frac{s_3}{s_1+s_3-1} \sum_{n \leq x}  \frac{1}{n^{s_2-1}} \int_y^\infty \frac{du}{u^{s_1}(u+n)^{s_3+1}} \nonumber \\
&+\frac{2^{-s_3}}{s_1+s_3-1} \sum_{n>x} \frac{1}{n^{s_1+s_2+s_3-1}} +\frac{s_3}{s_1+s_3-1} \sum_{n>x} \frac{1}{n^{s_2-1}} \int_n^\infty \frac{du}{u^{s_1}(u+n)^{s_3+1}} \nonumber \\
&+\begin{cases}
O \left( x^{-\sigma_1-\sigma_3} \right) & (\sigma_2>1) \\
O \left( x^{-\sigma_1-\sigma_3} \log x \right) & (\sigma_2=1) \\
O \left( x^{1-\sigma_1-\sigma_2-\sigma_3} \right) & (\sigma_2<1), \\
\end{cases} \nonumber 
\end{align}
where implicit constants depend on $s_1,s_2, \sigma_3$ and $\kappa$.
\end{theorem}

This Theorem is an analogue of approximation formulas for the double zeta-functions of the Euler -Zagier type which was proved by Matsumoto and Tsumura \cite{MaTsu} and of the Mordell-Tornheim type proved by Okamoto and Onozuka \cite{OO}. Further these are also analogues of approximation formula for the Riemann zeta-function which was proved by Hardy and Littlewood. 

\begin{lemma}({\cite[Lemma 4.10]{Tit}})
\label{lem:exponential sum}
Let $f(x)$ be a real function with a continuous and steadily decreasing derivative $f^\prime(x)$ in $(a,b)$, and let $f^\prime(b)=\alpha$, $f^\prime(a)=\beta$. Let $g(x)$ be a real positive decreasing function with a continuous derivative $g^\prime(x)$, satisfying that $\abs{g^\prime(x)}$ is steadily decreasing. Then
\begin{align*}
\sum_{a < n \leq b} g(n) e^{2\pi i f(n)} &= \sum_{\substack{\nu \in \mathbb{Z} \\ \alpha-\eta<\nu<\beta+\eta}} \int_a^b g(x) e^{2\pi i(f(x)-\nu x)} dx \\
&\quad+O \left(g(a) \log (\beta-\alpha+2) \right) +O \left( \abs{g^\prime(a)} \right)
\end{align*}
for an arbitrary $\eta \in(0,1)$.  
\end{lemma}

\begin{proof}[Proof of Theorem \ref{thm:approximation1}]
Let $s_j=\sigma_j+it_j \in \mathbb{C}$ ($j=1,2,3$) with $\sigma_1+\sigma_3>1$ and $\sigma_1+\sigma_2+\sigma_3>2$. Then for $M,N \in \mathbb{N}$ with $M \geq N \geq 2$, we have 
\begin{align}
\label{partial sum}
\zeta_{AV,2} (s_1,s_2,s_3) &= \sum_{n=1}^\infty \sum_{m>n} \frac{1}{m^{s_1}n^{s_2}(m+n)^{s_3}} \nonumber \\
&= \left( \sum_{n \leq N} \sum_{n<m \leq M} + \sum_{n \leq N} \sum_{m> M} + \sum_{n=N+1}^\infty \sum_{m>n} \right) \frac{1}{m^{s_1}n^{s_2}(m+n)^{s_3}} \nonumber \\
&= A + B + C, \nonumber 
\end{align}
say. 

Firstly, by the Euler-Maclaurin formula ({\cite[(2.1.2)]{Tit}}), the terms $B$ and $C$ are 
\begin{align*}
B &= \sum_{n \leq N} \frac{1}{n^{s_2}} \left( \sum_{m> M} \frac{1}{m^{s_1}(m+n)^{s_3}} \right) \\
&= \sum_{n \leq N} \frac{1}{n^{s_2}} \int_M^\infty \frac{du}{u^{s_1}(u+n)^{s_3}}-s_1 \sum_{n \leq N} \frac{1}{n^{s_2}} \int_M^\infty \frac{u-[u]-\frac{1}{2}}{u^{s_1+1}(u+n)^{s_3}} du \nonumber \\
&- s_3 \sum_{n \leq N} \frac{1}{n^{s_2}} \int_M^\infty \frac{u-[u]-\frac{1}{2}}{u^{s_1}(u+n)^{s_3+1}} du -\frac{1}{2M^{s_1}} \sum_{n \leq N} \frac{1}{n^{s_2}(M+n)^{s_3}} \nonumber \\
&= B_1-B_2-B_3-B_4,
\intertext{and}
C &= \sum_{n=N+1}^\infty \frac{1}{n^{s_2}} \left( \sum_{m>n} \frac{1}{m^{s_1}(m+n)^{s_3}} \right) \nonumber \\
&= \sum_{n=N+1}^\infty \frac{1}{n^{s_2}} \int_n^\infty \frac{du}{u^{s_1}(u+n)^{s_3}}-s_1 \sum_{n=N+1}^\infty \frac{1}{n^{s_2}} \int_n^\infty \frac{u-[u]-\frac{1}{2}}{u^{s_1+1}(u+n)^{s_3}} du \nonumber \\
&- s_3 \sum_{n=N+1}^\infty \frac{1}{n^{s_2}} \int_n^\infty \frac{u-[u]-\frac{1}{2}}{u^{s_1}(u+n)^{s_3+1}} du -2^{-s_3-1} \sum_{n=N+1}^\infty \frac{1}{n^{s_1+s_2+s_3}} \nonumber \\
&= C_1-C_2-C_3-C_4,
\end{align*}
say, respectively. We estimate $C_2, C_3, C_4$ and $B_2, B_3, B_4$ first and $C_1$ and $B_1$ later. As for $C_2$, since
$u+n \asymp u$ when $u \geq n$, we have 
\begin{align*}
C_2 &\ll \sum_{n=N+1}^\infty \frac{1}{n^{\sigma_2}} \int_n^\infty \frac{1}{u^{\sigma_1+1}(u+n)^{\sigma_3}} du \\
&\ll \sum_{n=N+1}^\infty \frac{1}{n^{\sigma_2}} \int_n^\infty \frac{1}{u^{\sigma_1+\sigma_3+1}} du. 
\end{align*}
Then, the last integral in the above converges absolutely when $\sigma_1+\sigma_3>0$ and $\int_n^\infty u^{-\sigma_1-\sigma_3-1} du = n^{-\sigma_1-\sigma_3}/(\sigma_1+\sigma_3)$, so we have 
\[
\sum_{n=N+1}^\infty \frac{1}{n^{\sigma_2}} \int_n^\infty \frac{\abs{u-[u]-\frac{1}{2}}}{u^{\sigma_1+1}(u+n)^{\sigma_3}} du \ll \sum_{n=N+1}^\infty \frac{1}{n^{\sigma_1+\sigma_2+\sigma_3}}.
\]
The series on the right hand side is absolutely convergent when $\sigma_1+\sigma_2+\sigma_3>1$, thus $C_2$ is analytic when $\sigma_1+\sigma_3>0$ and $\sigma_1+\sigma_2+\sigma_3>1$, and we have $C_2 =O( N^{1-\sigma_1-\sigma_2-\sigma_3})$.

The term $C_3$ is absolutely convergent when $\sigma_1+\sigma_3>0$ and $\sigma_1+\sigma_2+\sigma_3>1$, and in the same region we find that $C_3 = O ((\abs{s_3}/\sigma_3) N^{1-\sigma_1-\sigma_2-\sigma_3})$ 
and $C_4$ is absolutely convergent when $\sigma_1+\sigma_2+\sigma_3>1$. Hence 
we find that
\begin{equation}
\label{C_2-C_3-C_4}
C_2,C_3,C_4 \ll  \left(1+(\abs{s_3}/\sigma_3) \right) N^{1-\sigma_1-\sigma_2-\sigma_3}. 
\end{equation}

By the same argument as in the case of $C_2, C_3$ and $C_4$, we can find that $B_2$, $B_3$ and $B_4$ are absolutely convergent when $\sigma_1+\sigma_3>0$, and in the same region they are bounded by 
\begin{equation}
\label{B_2-B_3-B_4}
 B_2, B_3, B_4 \ll \left(1+(\abs{s_3}/\sigma_3) \right) \begin{cases}
N^{-\sigma_1-\sigma_3} & (\sigma_2>1) \\
N^{-\sigma_1-\sigma_3} \log N & (\sigma_2 =1) \\
N^{1-\sigma_1-\sigma_2-\sigma_3} & (\sigma_2 <1), \\
\end{cases}
\end{equation}
since $M>N$.

Secondly we estimate the term $A$. For $x,y \in \mathbb{R}$ with $x<N, y<M, x \leq y$, we divide $A$ as follows:
\begin{align*}
A &= \left( \sum_{n \leq x} \sum_{n<m \leq y} + \sum_{n \leq x} \sum_{y < m \leq M} + \sum_{x < n \leq N} \sum_{n<m \leq M} \right) \frac{1}{m^{s_1}n^{s_2}(m+n)^{s_3}} \\
&= \sum_{n \leq x} \sum_{n<m \leq y} \frac{1}{m^{s_1}n^{s_2}(m+n)^{s_3}} + A_1 + A_2,
\end{align*}
say. Now we fix $n \in \mathbb{N}$ and define two real functions
\[
f(u) = \frac{t_1}{2\pi} \log u + \frac{t_3}{2\pi} \log (u+n), \quad g(u) = \frac{1}{u^{\sigma_1}(u+n)^{\sigma_3}}
\]
in $(y,M)$. When $(t_1, t_3) \in \{ (t_1,t_3) \mid t_1, t_3 \geq 0, (t_1,t_3) \neq (0,0) \}$, then the function $f^\prime(u)$ is steadily decreasing  in $(y,M)$, and since $u > y \geq x$, we find that
\begin{equation*}
\label{f less than 1}
0< f^\prime (u) = \frac{1}{2\pi} \left( \frac{t_1}{u} +\frac{t_3}{u+n} \right) \leq \frac{1}{2\pi u} \left( t_1+t_3 \right) < \frac{1}{2\pi x} \left( t_1+t_3 \right) \leq 1/\kappa <1.
\end{equation*}
Furthermore, the functions $g(x)$ and its derivative $\abs{g^\prime (x)}$ are decreasing when $\sigma_1 \geq 0$ and $\sigma_3>0$. Hence we can apply Lemma \ref{lem:exponential sum} with $(a, b)=(y,M)$ and $\eta =\alpha + \varepsilon = f^\prime (M)+\varepsilon<1$ for small $\varepsilon>0$. Now we consider the following two possibilities: 
\begin{enumerate}[(I)]
\item {\bf The case $\beta+\eta = f^\prime (y)+f^\prime (M)+\varepsilon<1$:} From Lemma \ref{lem:exponential sum}, we have
\[
\sum_{y < m \leq M} g(m) e^{2\pi i f(m)} = \int_y^M g(u) e^{2\pi i f(u)} du + O \left( \frac{1}{y^{\sigma_1}(y+n)^{\sigma_3}} \right).
\]
The above integral is equal to
\[
\int_y^M \frac{e^{2\pi i \left( \frac{t_1}{2\pi} \log u + \frac{t_3}{2\pi} \log (u+n) \right)}}{u^{\sigma_1}(u+n)^{\sigma_3}} du= \int_y^M \frac{1}{u^{\overline{s_1}}(u+n)^{\overline{s_3}}} du,
\]
so we have 
\[
\sum_{y < m \leq M} g(m) e^{2\pi i f(m)} = \int_y^M \frac{1}{u^{\overline{s_1}}(u+n)^{\overline{s_3}}} du+ O \left( \frac{1}{y^{\sigma_1}(y+n)^{\sigma_3}} \right).
\]
 
\item {\bf The case $\beta+\eta = f^\prime (y)+f^\prime (M)+\varepsilon>1$:} In this case we can estimate similarly as in Hardy and Littlewood \cite{HL}. By Lemma \ref{lem:exponential sum}, we have 
\[
\sum_{y < m \leq M} g(m) e^{2\pi i f(m)} = \sum_{\nu =0}^1 \int_y^M g(u) e^{2\pi i \left(f(u) -\nu u\right)} du + O \left( \frac{1}{y^{\sigma_1}(y+n)^{\sigma_3}} \right).
\]
If $\nu=0$, we can calculate as in (I), so remaining task is to estimate the term comes from $\nu=1$. We put $w(u)=u-f(u)=u-\frac{t_1}{2\pi} \log u - \frac{t_3}{2\pi} \log (u+n)$, then 	
\begin{align*}
\frac{dw}{du} &=1-\frac{1}{2\pi} \left( \frac{t_1}{u}+\frac{t_3}{u+n} \right), \\
\frac{d^2 w}{du^2} &= \frac{1}{2\pi} \left( \frac{t_1}{u^2}+\frac{t_3}{(u+n)^2} \right) >0.
\end{align*}
Hence 
\begin{align*}
\Re{ \left( \int_y^M g(u) e^{2\pi i \left( f(u) -u \right)} du \right)} &=\Re{ \left( \int_y^M \frac{e^{-2\pi i w(u)}}{u^{\sigma_1}(u+n)^{\sigma_3}} du \right)} \\
&= \int_{w(y)}^{w(M)} \frac{1}{u^{\sigma_1}(u+n)^{\sigma_3}} \frac{\cos(2\pi w) dw}{1-\frac{1}{2\pi} \left( \frac{t_1}{u}+\frac{t_3}{u+n} \right)}. 
\end{align*}
Since the function $u^{-\sigma_1}(u+n)^{-\sigma_3} \cdot (1-\frac{1}{2\pi}(\frac{t_1}{u}-\frac{t_3}{u+n}) )^{-1}$ is positive monotonically decreasing, from the second mean value theorem there exists $M_1 \in  [y,M]$ such that the above is equal to
\begin{align*}
& \frac{1}{y^{\sigma_1}(y+n)^{\sigma_3}} \frac{1}{1-\frac{1}{2\pi} \left( \frac{t_1}{y}+\frac{t_3}{y+n} \right)} \int_{w(y)}^{w(M_1)} \cos(2\pi w) dw \\
&\ll y^{-\sigma_1}(y+n)^{-\sigma_3} \abs*{ \int_{w(y)}^{w(M_1)} \cos(2\pi w) dw } \\
&\ll y^{-\sigma_1}(y+n)^{-\sigma_3}. 
\end{align*}
Similarly we have 
\[
\Im{ \left( \int_y^M g(u) e^{2\pi i \left( f(u) -u \right)} du \right)} \ll y^{-\sigma_1}(y+n)^{-\sigma_3}.
\]
\end{enumerate}
Therefore from (I) and (II), we obtain the same result 
\begin{equation}
\label{cc}
\sum_{y < m \leq M} g(m) e^{2\pi i f(m)} = \int_y^M \frac{1}{u^{\overline{s_1}}(u+n)^{\overline{s_3}}} du+ O \left( \frac{1}{y^{\sigma_1}(y+n)^{\sigma_3}} \right).
\end{equation}
Taking the complex conjugates on the both sides, we have 
\begin{align}
\label{error is analytic}
\sum_{y < m \leq M} \frac{1}{m^{s_1}(m+n)^{s_3}} &= \int_y^M \frac{du}{u^{s_1}(u+n)^{s_3}} + O \left( \frac{1}{y^{\sigma_1}(y+n)^{\sigma_3}} \right).
\end{align}
Using integration by parts, we have 
\begin{align*}
\sum_{y < m \leq M} \frac{1}{m^{s_1}(m+n)^{s_3}} &= \frac{M^{-s_1+1}(M+n)^{-s_3}}{1-s_1-s_3}+\frac{y^{-s_1+1}(y+n)^{-s_3}}{s_1+s_3-1} \\
&+\frac{ns_3}{s_1+s_3-1} \int_y^M \frac{du}{u^{s_1}(u+n)^{s_3+1}} + O \left( \frac{1}{y^{\sigma_1}(y+n)^{\sigma_3}} \right). 
\end{align*}

Denote the above error term by $E(s_1,s_3;y,n,M)$. We find that the function $E(s_1,s_3;y,n,M)$ is analytic in $s_3$, and the singularity is only $s_3=-s_1+1$ and satisfies 
\begin{equation}
\label{error term 1}
E(s_1,s_3;y,n,M) = O \left( \frac{1}{y^{\sigma_1}(y+n)^{\sigma_3}} \right)
\end{equation}
uniformly in $M$ in the region $\sigma_1+\sigma_3>0$ and $s_1+s_3\neq 1$.

Substituting (\ref{error is analytic}) into $A_1$, we have 
\begin{equation*}
A_1= \sum_{n \leq x} \frac{1}{n^{s_2}} \int_y^M \frac{du}{u^{s_1} (u+n)^{s_3}} + \sum_{n \leq x} \frac{E(s_1,s_3;y,n,M)}{n^{s_2}}.
\end{equation*}

Similarly, we apply Lemma \ref{lem:exponential sum} to $A_2$ with $(a, b)=(n,M)$, and denote its error term by $E(s_1,s_3;n,M)$, then we have 
\begin{equation*}
A_2 = \sum_{x < n \leq N} \frac{1}{n^{s_2}} \int_n^M \frac{du}{u^{s_1} (u+n)^{s_3}} + \sum_{x < n \leq N} \frac{E(s_1,s_3;n,M)}{n^{s_2}},
\end{equation*}
where the function $E(s_1,s_3;n,M)$ is analytic in $s_3$ except for $s_3=-s_1+1$ and satisfies 
\begin{equation}
\label{error term 2}
E(s_1,s_3;n,M) = O \left( \frac{1}{n^{\sigma_1+\sigma_3}} \right)
\end{equation}
uniformly in $M$ in the region $\sigma_1+\sigma_3>0$ and $s_1+s_3\neq 1$. 

Hence we obtain
\begin{align}
\label{before integration by part}
A_1+A_2+B_1+C_1 &= \sum_{n \leq x} \frac{1}{n^{s_2}} \int_y^M \frac{du}{u^{s_1} (u+n)^{s_3}} +\sum_{x < n \leq N} \frac{1}{n^{s_2}} \int_n^M \frac{du}{u^{s_1} (u+n)^{s_3}} \nonumber \\
&+\sum_{n \leq N} \frac{1}{n^{s_2}} \int_M^\infty \frac{du}{u^{s_1} (u+n)^{s_3}} +\sum_{n=N+1}^\infty \frac{1}{n^{s_2}} \int_n^\infty \frac{du}{u^{s_1} (u+n)^{s_3}} \nonumber \\
&+\sum_{n \leq x} \frac{E(s_1,s_3;y,n,M)}{n^{s_2}}+\sum_{x < n \leq N} \frac{E(s_1,s_3;n,M)}{n^{s_2}} \nonumber \\
&=\sum_{n \leq x} \frac{1}{n^{s_2}} \int_y^\infty \frac{du}{u^{s_1} (u+n)^{s_3}} + \sum_{x < n \leq N} \frac{1}{n^{s_2}} \int_n^\infty \frac{du}{u^{s_1} (u+n)^{s_3}} \nonumber \\
&+\sum_{n=N+1}^\infty \frac{1}{n^{s_2}} \int_n^\infty \frac{du}{u^{s_1} (u+n)^{s_3}} \nonumber \\
&+\sum_{n \leq x} \frac{E(s_1,s_3;y,n,M)}{n^{s_2}}+\sum_{x < n \leq N} \frac{E(s_1,s_3;n,M)}{n^{s_2}} \nonumber \\
&=\sum_{n \leq x} \frac{1}{n^{s_2}} \int_y^\infty \frac{du}{u^{s_1} (u+n)^{s_3}} + \sum_{n>x} \frac{1}{n^{s_2}} \int_n^\infty \frac{du}{u^{s_1} (u+n)^{s_3}} \nonumber \\
&+\sum_{n \leq x} \frac{E(s_1,s_3;y,n,M)}{n^{s_2}}+\sum_{x < n \leq N} \frac{E(s_1,s_3;n,M)}{n^{s_2}}.  
\end{align}

Therefore we have
\begin{align}
\label{integration by part}
\zeta_{AV,2} (s_1,s_2,s_3) &= \sum_{n \leq x} \sum_{n<m \leq y} \frac{1}{m^{s_1}n^{s_2}(m+n)^{s_3}} + A_1 + A_2 \nonumber \\
&+B_1-B_2-B_3-B_4+C_1-C_2-C_3-C_4 \nonumber \\
&= \sum_{n \leq x} \sum_{n<m \leq y} \frac{1}{m^{s_1}n^{s_2}(m+n)^{s_3}} \nonumber \\
&+\sum_{n \leq x} \frac{1}{n^{s_2}} \int_y^\infty \frac{du}{u^{s_1} (u+n)^{s_3}} + \sum_{n>x} \frac{1}{n^{s_2}} \int_n^\infty \frac{du}{u^{s_1} (u+n)^{s_3}} \nonumber \\
&+\sum_{n \leq x} \frac{E(s_1,s_3;y,n,M)}{n^{s_2}}+\sum_{x < n \leq N} \frac{E(s_1,s_3;n,M)}{n^{s_2}} \nonumber \\
&-B_2-B_3-B_4-C_2-C_3-C_4  
\end{align}
in the region $\sigma_1+\sigma_3>1, \sigma_1+\sigma_2+\sigma_3>2, \sigma_1 \geq 0,\sigma_3>0$ and $(t_1,t_3) \in \{ (t_1,t_3) \mid t_1, t_3 \geq 0, (t_1,t_3) \neq (0,0) \}$ (Although the above equality does not give the analytic continuation of $\zeta_{AV,2} (s_1,s_2,s_3)$, we use this identity (\ref{integration by part}) in order to prove Theorem \ref{thm:approximation2}).

Using integration by parts to the first two terms of (\ref{integration by part}), we have
\begin{align*}
&\sum_{n \leq x} \frac{1}{n^{s_2}} \int_y^\infty \frac{du}{u^{s_1} (u+n)^{s_3}} + \sum_{n>x} \frac{1}{n^{s_2}} \int_n^\infty \frac{du}{u^{s_1} (u+n)^{s_3}} \\
&=\frac{y^{1-s_1}}{s_1+s_3-1} \sum_{n \leq x} \frac{1}{n^{s_2}(y+n)^{s_3}} +\frac{s_3}{s_1+s_3-1} \sum_{n \leq x}  \frac{1}{n^{s_2-1}} \int_y^\infty \frac{du}{u^{s_1}(u+n)^{s_3+1}} \nonumber \\
&+\frac{2^{-s_3}}{s_1+s_3-1} \sum_{n>x} \frac{1}{n^{s_1+s_2+s_3-1}} +\frac{s_3}{s_1+s_3-1} \sum_{n>x} \frac{1}{n^{s_2-1}} \int_n^\infty \frac{du}{u^{s_1}(u+n)^{s_3+1}}.
\end{align*}
The second term in the above is convergent absolutely when $\sigma_1+\sigma_3>0$, and the third and the forth terms are convergent absolutely when $\sigma_1+\sigma_3>0$ and $\sigma_1+\sigma_2+\sigma_3>2$. 

Therefore if $\sigma_3 > \max \{0, 2-\sigma_1-\sigma_2 \}$, $s_1+s_3 \neq 1$, $\sigma_1 \geq 0$ and $(t_1,t_3) \in \{ (t_1,t_3) \mid t_1, t_3 \geq 0, (t_1,t_3) \neq (0,0) \}$, then we have
\begin{align*}
&\zeta_{AV,2} (s_1,s_2,s_3) \\
&= \sum_{n \leq x} \sum_{n<m \leq y} \frac{1}{m^{s_1}n^{s_2}(m+n)^{s_3}} \nonumber \\
&+\frac{y^{1-s_1}}{s_1+s_3-1} \sum_{n \leq x} \frac{1}{n^{s_2}(y+n)^{s_3}} +\frac{s_3}{s_1+s_3-1} \sum_{n \leq x}  \frac{1}{n^{s_2-1}} \int_y^\infty \frac{du}{u^{s_1}(u+n)^{s_3+1}} \nonumber \\
&+\frac{2^{-s_3}}{s_1+s_3-1} \sum_{n>x} \frac{1}{n^{s_1+s_2+s_3-1}} +\frac{s_3}{s_1+s_3-1} \sum_{n>x} \frac{1}{n^{s_2-1}} \int_n^\infty \frac{du}{u^{s_1}(u+n)^{s_3+1}} \nonumber \\ 
&+\sum_{n \leq x} \frac{E(s_1,s_3;y,n,M)}{n^{s_2}}+\sum_{x < n \leq N} \frac{E(s_1,s_3;n,M)}{n^{s_2}} \nonumber \\
&-B_2-B_3-B_4-C_2-C_3-C_4.  
\end{align*}
From (\ref{C_2-C_3-C_4}) and (\ref{B_2-B_3-B_4}), we see that $B_2$,$B_3$,$B_4$,$C_2$,$C_3$,$C_4 \to 0 \ (N \to \infty)$. So implicit constants depend on $s_1, s_2$ and $\sigma_3$, but not on $t_3$.

Finally from (\ref{error term 1}) and (\ref{error term 2}), when $\sigma_1+\sigma_3>0$ and $\sigma_1+\sigma_2+\sigma_3>2$ the error terms are 
\begin{align*}
\frac{1}{y^{\sigma_1}} \sum_{n \leq x} \frac{1}{n^{\sigma_2}(y+n)^{\sigma_3}} &\ll \frac{1}{y^{\sigma_1+\sigma_3}} \sum_{n \leq x} \frac{1}{n^{\sigma_2}} \\
&\ll \begin{cases}
x^{-\sigma_1-\sigma_3} & (\sigma_2>1) \\
x^{-\sigma_1-\sigma_3}(\log x) & (\sigma_2=1) \\
x^{1-\sigma_1-\sigma_2-\sigma_3} & (\sigma_2<1), \\
\end{cases} 
\end{align*}
\begin{align*}
\sum_{x < n \leq N} \frac{1}{n^{\sigma_1+\sigma_2+\sigma_3}} &\ll \frac{N^{1-\sigma_1-\sigma_2-\sigma_3}-x^{1-\sigma_1-\sigma_2-\sigma_3}}{1-\sigma_1-\sigma_2-\sigma_3} \\
&\to \frac{x^{1-\sigma_1-\sigma_2-\sigma_3}}{\sigma_1+\sigma_2+\sigma_3-1} \ (N \to \infty).
\end{align*}
Thus we obtain the Theorem \ref{thm:approximation1} in the case $(t_1,t_3) \in \{ (t_1,t_3) \mid t_1, t_3 \geq 0, (t_1,t_3) \neq (0,0) \}$. We can prove the case $(t_1,t_3) \in \{ (t_1,t_3) \mid  t_1, t_3 \leq 0, (t_1,t_3) \neq (0,0) \}$ by a similar argument without considering the complex conjugates on (\ref{cc}).  
\end{proof}

\subsection{The second approximation formula}
Next, we give the second approximation formula for the Apostol-Vu double zeta-function which is necessary to prove Theorem \ref{thm:third thm}.
\begin{theorem}
\label{thm:approximation2}
Let $s_j=\sigma_j+it_j $ ($j=1,2,3$) be complex variables with $\sigma_1 \geq 0, t_1 \geq 0$, $\sigma_3> \max \{ 0, \frac{1}{2}-\sigma_1, \frac{3}{2}-\sigma_1-\sigma_2 \}$ and $t_3 \geq2$. Then when $s_1+s_3 \neq 1, s_1+s_2+s_3 \neq 2$, we have 
\begin{equation}
\label{formula:second appoximation}
\zeta_{AV,2} (s_1,s_2,s_3) = \sum_{m \leq at_3} \sum_{n<m} \frac{1}{m^{s_1}n^{s_2}(m+n)^{s_3}} +\begin{cases}
O ( t_3^{\frac{1}{2}-\sigma_1-\sigma_3} ) & (\sigma_2>\frac{3}{2}) \\
O (t_3^{\frac{1}{2}-\sigma_1-\sigma_3} \log t_3 ) & (\sigma_2=\frac{3}{2}) \\
O ( t_3^{\frac{3}{2}-\sigma_1-\sigma_2-\sigma_3} ) & (\sigma_2<\frac{3}{2}), \\
\end{cases}
\end{equation}
where $a=\max \{1, t_1 \}$ and implicit constants depend on $s_1,s_2$ and $\sigma_3$.
\end{theorem}
In order to prove Theorem \ref{thm:approximation2}, we use the classical Mellin-Barnes integral formula, that is
\[
(1+\lambda)^{-s} = \frac{1}{2 \pi i} \int_{(c)} \frac{\Gamma(s+z)\Gamma(-z)}{\Gamma(s)} \lambda^z dz,
\]
where $s, \lambda \in \mathbb{C}$ with $\sigma =\Re (s) >0, \abs{\arg \lambda}< \pi, \lambda \neq 0, c$ is real with $-\sigma<c<0$, and the path $(c)$ of integration is the vertical line $\Re (z) =c$ (see ({\cite[Section 14.51, p. 289, Corollary]{WW}}).

Further, we give the following Lemma to prove Theorem \ref{thm:approximation2}.
\begin{lemma}
\label{lem:integral deribvative}
Let $[a,b)$ and $V$ be an interval in $\mathbb{R}$. Assume that the continuous function $f:[a,b) \times V \to \mathbb{R}$ satisfies the following conditions:
\begin{enumerate}[(a)]
\item For any $v \in V$, the integral $\int_a^b f(u,v) du$ converges.  
\item The partial derivative $\frac{\partial f}{\partial v} (u,v)$ is continuous in $[a,b) \times V$.  
\item The integral $\int_a^b \frac{\partial f}{\partial v} (u,v) du$ converges uniformly on any compact sets in $V$.
\end{enumerate}
Then we have the following statements:
\begin{enumerate}[(1)]
\item The function $F_1(v)=\int_a^b f (u,v) du$ has the continuous derivative in $V$, and it holds that
\[
F_1^\prime (v) = \int_a^b \frac{\partial f}{\partial v} (u,v) du.
\] 
\item Moreover if $V \subset [a,b)$, The function $F_2(v)=\int_v^b f (u,v) du$ has the continuous derivative in $V$, and it holds that
\[
F_2^\prime (v) = \int_v^b \frac{\partial f}{\partial v} (u,v) du - f(v,v).
\]
\end{enumerate}
\end{lemma}
\begin{proof}
We note that (1) is classically known (for example {\cite[Chapter I\hspace{-.15em}V, Theorem 14.4]{Su}}). However, we give a full proof since the author could not find another suitable reference.

Take $G(v)=\int_a^b \frac{\partial f}{\partial v} (u,v) du$ and take any $t \in [a,b)$. Then by ($b$), the function
\[
G_t (v) = \int_a^t \frac{\partial f}{\partial v} (u,v) du
\]
is continuous on $V$. By ($c$), we see that $G(v)$ is continuous in $V$ since $\{ G_t\}_{t \in [a,b)}$ converges uniformly to $G(v)$ on any compact subset in $V$. 

Moreover if $[c,x] \subset V$ is an arbitrary bounded closed interval, then
\begin{equation*}
\int_c^x G_t (v) dv = \int_a^t \left( \int_c^x \frac{\partial f}{\partial v} (u,v) dv \right) du
\end{equation*}
holds for any $t \in [a,b)$. Since $\{ G_t\}_{t \in [a,b)}$ converges to $G$ uniformly on $[c,x]$, by the termwise integration, the left hand side in the above converges to $\int_c^x G(v) dv$ as $t \to b-0$. Then 
\begin{equation*}
\int_c^x G(v) dv = \int_a^b \left( \int_c^x  \frac{\partial f}{\partial v} (u,v) dv \right) du
\end{equation*}
holds. The above is equal to
\[
\int_a^b ( f(u,x)-f(u,c) ) du = F_1(x) -F_1(c).
\]
Therefore $F_1$ is differentiable and $(1)$ holds since $x \in V$ is arbitrary.

(2) Take 
\[
H(v) = \int_v^b \frac{\partial f}{\partial v} (u,v) du.
\]
Then by the same argument in $(1)$, we see that the function $H(v)$ is continuous in $V$ and 
\begin{align*}
\int_c^x H(v) dv &= \int_c^x \int_v^b \frac{\partial f}{\partial v} (u,v) du dv \\
&= \int_c^x \int_c^u \frac{\partial f}{\partial v} (u,v) dv du + \int_x^b \int_c^x \frac{\partial f}{\partial v} (u,v) dv du. 
\end{align*}
The second identity holds since $V \subset [a,b)$. Similar to $(1)$, each term in the above is equal to
\begin{align*}
& \int_c^x (f(u,u) -f(u,c)) du +\int_c^b (f(u,x)-f(u,c)) du \\
&=\int_c^x f(u,u) du +\int_c^b f(u,x) du -\int_c^b f(u,c) du \\
&= \int_c^x f(u,u) du +F_2 (x) -F_2 (c).
\end{align*}
Differentiating both sides with respect to $x$, we have
\[
H(x)= f(x,x) + F_2^\prime (x).
\]
Thus we obtain $(2)$ .  
\end{proof}

\begin{proof}[Proof of Theorem \ref{thm:approximation2}]
Assume that $\sigma_1 \geq 0, t_1 \geq 0, \sigma_3 > 0, t_3 \geq 2, \sigma_1+\sigma_3>1$ and $\sigma_1+\sigma_2+\sigma_3>2$. Let $a=\max \{1, t_1 \}, x =y= at_3$ and $\kappa=\frac{4\pi}{3}$ in Theorem \ref{thm:approximation1}. Then it satisfies the assumption of Theorem \ref{thm:approximation1}:
\[
\abs{t_3} \leq 2\pi x/\kappa -\abs{t_1}.
\]
Indeed, if $a=\max \{1, t_1 \}, x =y= at_3$ and $\kappa=\frac{4\pi}{3}$ then
\[
\frac{2\pi x}{\kappa} - t_1 =\frac{3}{2} \max \{ 1, t_1 \} t_3-t_1 = \begin{cases}
\frac{3}{2} t_3-t_1 & (t_1 \leq 1) \\ 
\frac{3}{2}t_1 t_3-t_1 & (t_1 > 1). \\ 
\end{cases}
\]
If $t_1 \leq1$, then $\frac{3}{2} t_3-t_1 \geq \frac{3}{2} t_3-1 \geq t_3$ since $t_3 \geq 2$. The case $t_1 >1$ is also true since the equation $\frac{3}{2} t_1t_3-t_1 = t_3$ has no solution if $t_3 \geq 2$.  

Hence let us take $x =y= at_3$ in Theorem \ref{thm:approximation1}. Then by (\ref{integration by part}), if $\sigma_1+\sigma_3>1, \sigma_1+\sigma_2+\sigma_3>2, \sigma_1 \geq 0,\sigma_3>0$ and $(t_1,t_3) \in \{(t_1,t_3) \vert t_1, t_3 \geq 0, (t_1,t_3) \neq 0 \},$ we see that
\begin{align}
\label{AV_2}
&\zeta_{AV,2} (s_1,s_2,s_3) \nonumber \\
&= \sum_{m \leq at_3} \sum_{n<m} \frac{1}{m^{s_1}n^{s_2}(m+n)^{s_3}} \nonumber \\
&+\sum_{n \leq at_3} \frac{1}{n^{s_2}} \int_{at_3}^\infty \frac{du}{u^{s_1} (u+n)^{s_3}} + \sum_{n>at_3} \frac{1}{n^{s_2}} \int_n^\infty \frac{du}{u^{s_1} (u+n)^{s_3}} \nonumber \\
&+\sum_{n \leq at_3} \frac{E(s_1,s_3;at_3,n,M)}{n^{s_2}}+\sum_{at_3 < n \leq N} \frac{E(s_1,s_3;n,M)}{n^{s_2}} \nonumber \\
&-B_2-B_3-B_4-C_2-C_3-C_4. 
\end{align}
Last eight terms are analytic when $\sigma_1+\sigma_2+\sigma_3>1, \sigma_1+\sigma_3>0$ and $s_1+s_3 \neq1$ from (\ref{C_2-C_3-C_4}), (\ref{B_2-B_3-B_4}), (\ref{error term 1}) and (\ref{error term 2}). On the other hand, the second term and the third term are absolutely convergent in $\sigma_1+\sigma_3>1$ and $\sigma_1+\sigma_2+\sigma_3>2$. In order to complete the proof of the Theorem, we consider these terms. Denote the second term and the third term in (\ref{AV_2}) by $D_1$ and $D_2$, respectively.

Now we define two functions
\[
\phi_1(v) = \frac{1}{v^{s_2}} \int_{at_3}^\infty  \frac{1}{u^{s_1} (u+v)^{s_3}} du , \quad \phi_2(v) = \frac{1}{v^{s_2}} \int_v^\infty  \frac{1}{u^{s_1} (u+v)^{s_3}} du.
\]
Then the function $u^{-s_1} v^{-s_2} (u+n)^{-s_3}$ satisfies assumptions of Lemma \ref{lem:integral deribvative}. Hence $\phi_1$ and $\phi_2$ have the continuous derivation in $v$ and 
\begin{align*}
\phi_1^\prime (v) &= \frac{-s_2}{v^{s_2+1}} \int_{at_3}^\infty \frac{1}{u^{s_1} (u+v)^{s_3}} du - \frac{1}{v^{s_2}} \int_{at_3}^\infty  \frac{s_3}{u^{s_1} (u+v)^{s_3+1}} du, \\
\phi_2^\prime (v) &= \frac{-s_2}{v^{s_2+1}} \int_v^\infty \frac{1}{u^{s_1} (u+v)^{s_3}} du + \frac{1}{v^{s_2}} \left( \int_v^\infty  \frac{-s_3}{u^{s_1} (u+v)^{s_3+1}} du -\frac{2^{-s_3}}{v^{s_1+s_3}} \right) \\
&= -\frac{s_2}{v^{s_2+1}} \int_v^\infty \frac{1}{u^{s_1} (u+v)^{s_3}} du  - \frac{1}{v^{s_2}} \int_v^\infty  \frac{s_3}{u^{s_1} (u+v)^{s_3+1}} du -\frac{2^{-s_3}}{v^{s_1+s_2+s_3}}.
\end{align*}

By the Euler-Maclaurin formula ({\cite[(2.1.2)]{Tit}}) again, we have 
\begin{align*}
D_1 &= \int_1^{at_3} \frac{1}{v^{s_2}} \int_{at_3}^\infty \frac{1}{u^{s_1} (u+v)^{s_3}} dudv -s_2 \int_1^{at_3} \frac{v-[v]-\frac{1}{2}}{v^{s_2+1}} \int_{at_3}^\infty \frac{1}{u^{s_1} (u+v)^{s_3}} dudv \\
&- s_3 \int_1^{at_3} \frac{v-[v]-\frac{1}{2}}{v^{s_2}} \int_{at_3}^\infty \frac{1}{u^{s_1} (u+v)^{s_3+1}} dudv +\frac{1}{2} \int_{at_3}^\infty \frac{1}{u^{s_1} (u+1)^{s_3}} du \\
&- \frac{\left(at_3-[at_3]-\frac{1}{2} \right)}{(at_3)^{s_2}} \int_{at_3}^\infty \frac{1}{u^{s_1} (u+at_3)^{s_3}} du \\
&= D_{11}-D_{12}-D_{13}+\frac{1}{2}D_{14}-D_{15}, \\
\intertext{and}
D_2 &= \int_{at_3}^\infty \frac{1}{v^{s_2}} \int_v^\infty \frac{1}{u^{s_1} (u+v)^{s_3}} dudv -s_2 \int_{at_3}^\infty \frac{v-[v]-\frac{1}{2}}{v^{s_2+1}} \int_v^\infty \frac{1}{u^{s_1} (u+v)^{s_3}} dudv \\
&- s_3 \int_{at_3}^\infty \frac{v-[v]-\frac{1}{2}}{v^{s_2}} \int_v^\infty \frac{1}{u^{s_1} (u+v)^{s_3+1}} dudv -\frac{1}{2^{s_3}} \int_{at_3}^\infty \frac{v-[v]-\frac{1}{2}}{v^{s_1+s_2+s_3}} dv \\
&+ \frac{\left(at_3-[at_3]-\frac{1}{2} \right)}{(at_3)^{s_2}} \int_{at_3}^\infty \frac{1}{u^{s_1} (u+at_3)^{s_3}} du \\
&=D_{21}-D_{22}-D_{23}-\frac{1}{2^{s_3}}D_{24}+D_{25},
\end{align*}
say, respectively. Since the term $D_{15}$ is cancelled with $D_{25}$, we have
\begin{equation}
\label{D_1+D_2}
D_1+D_2 = D_{11}-D_{12}-D_{13}+\frac{1}{2}D_{14} + (D_{21}-D_{22}-D_{23}-\frac{1}{2^{s_3}}D_{24}).
\end{equation}

We estimate $D_2$ first and $D_1$ later. The term $D_{24}$ is absolutely convergent in the region $\sigma_1+\sigma_2+\sigma_3>1$ and in this region we have
\begin{equation}
\label{D_24}
D_{24} \ll t_3^{1-\sigma_1-\sigma_2-\sigma_3}.
\end{equation}

As for the term $D_{22}$, by using integration by parts, we have
\begin{align*}
D_{22} &= \frac{s_2 2^{-s_3}}{s_1+s_3-1} \int_{at_3}^\infty \frac{v-[v]-\frac{1}{2}}{v^{s_1+s_2+s_3}} dv \\
&+ \frac{s_2 s_3}{s_1+s_3-1} \int_{at_3}^\infty \frac{v-[v]-\frac{1}{2}}{v^{s_2}} \int_v^\infty \frac{1}{u^{s_1} (u+v)^{s_3+1}} dudv \nonumber \\
&= D_{221}+D_{222},
\end{align*}
say. The term $D_{221}$ is analytic in the region $\sigma_1+\sigma_2+\sigma_3>1$ and $s_1+s_3 \neq 1$. Also, the  term $D_{222}$ satisfies 
\begin{align*}
\int_{at_3}^\infty \frac{\abs{v-[v]-\frac{1}{2} }}{v^{\sigma_2}}\int_v^\infty \frac{1}{u^{\sigma_1}(u+v)^{\sigma_3+1}} dudv &\ll \int_{at_3}^\infty \frac{1}{v^{\sigma_2}}\int_v^\infty \frac{1}{u^{\sigma_1}(u+v)^{\sigma_3+1}} dudv \\
&\ll \int_{at_3}^\infty \frac{1}{v^{\sigma_1+\sigma_2+\sigma_3}} dv. 
\end{align*}
Therefore $D_{222}$ is analytic in the region $\sigma_1+\sigma_3>0, \sigma_1+\sigma_2+\sigma_3>1$ and $s_1+s_3 \neq 1$, and we obtain 
\begin{align}
\label{D_22}
D_{22} &\ll t_3^{-1} t_3^{1-\sigma_1-\sigma_2-\sigma_3} + t_3^{1-\sigma_1-\sigma_2-\sigma_3}. \end{align}

Next, we estimate the term $D_{23}$. By the Mellin-Barnes integral formula, we have
\begin{align*}
D_{23} &= s_3 \int_{at_3}^\infty \frac{v-[v]-\frac{1}{2}}{v^{s_2}} \int_v^\infty \frac{1}{u^{s_1+s_3+1} (1+\frac{v}{u})^{s_3+1}} dudv \\
&= \frac{s_3}{2 \pi i \Gamma(s_3+1)} \int_{at_3}^\infty \int_v^\infty \int_{(c)} \frac{(v-[v]-\frac{1}{2}) \Gamma(s_3+1+z) \Gamma(-z)}{v^{s_2-z}u^{s_1+s_3+1+z}} dzdudv
\end{align*}
where $\max \{-\sigma_1-\sigma_3, -\sigma_3-1 \}<c<0$. Here we can change the order of the integration. Since $\sigma_1+\sigma_3+c>0$ and $\sigma_1+\sigma_2+\sigma_3>2$, the above integral is absolutely convergent with respect to $u$ and $v$, respectively. Moreover using the Stirling formula, we find that this is absolutely convergent with respect to $z$. Hence we obtain
\[
D_{23} = \frac{s_3}{2 \pi i \Gamma(s_3+1)} \left( \int_{at_3}^\infty \frac{v-[v]-\frac{1}{2}}{v^{s_1+s_2+s_3}} dv \right) \int_{(c)} \frac{\Gamma(s_3+1+z) \Gamma(-z)}{s_1+s_3+z} dz.
\]
Then, shift the path $(c)$ to $\left( \frac{1}{2} \right)$. Since the relevant pole is only $z=0$ we have
\begin{align}
\label{D23}
D_{23} &= \frac{s_3}{s_1+s_3} \int_{at_3}^\infty \frac{v-[v]-\frac{1}{2}}{v^{s_1+s_2+s_3}} dv \\
&+ \frac{s_3}{2 \pi i \Gamma(s_3+1)} \left( \int_{at_3}^\infty \frac{v-[v]-\frac{1}{2}}{v^{s_1+s_2+s_3}} dv \right) \int_{(\frac{1}{2})} \frac{\Gamma(s_3+1+z) \Gamma(-z)}{s_1+s_3+z} dz. \nonumber 
\end{align}
The integral $\int_{at_3}^\infty (v-[v]-\frac{1}{2})/v^{s_1+s_2+s_3} dv$ is absolutely convergent in the region $\sigma_1+\sigma_2+\sigma_3>1$, and the last integral is analytic when $\sigma_1+\sigma_3>-\frac{1}{2}$ and $\sigma_3>-\frac{3}{2}$. Applying the Stirling formula and Lemma 4 of \cite{Ma2}, we see that
\begin{align*}
&\frac{1}{\abs{\Gamma(s_3+1)}} \int_{(\frac{1}{2})} \abs*{ \frac{\Gamma(s_3+1+z) \Gamma(-z)}{s_1+s_3+z} } \abs{dz} \\
&= \frac{1}{\abs{\Gamma(s_3+1)}} \int_{-\infty}^\infty \frac{\abs{ \Gamma(\sigma_3+\frac{3}{2} + i(t_3+w)) \Gamma(-\frac{1}{2}-iw)}}{\abs{\sigma_1+\sigma_3+\frac{1}{2}+i(t_1+t_3+w)}} dw \\
&\ll t_3^{-\sigma_3-\frac{1}{2}} + t_3^{-\frac{1}{2}}.
\end{align*}
Therefore we have
\begin{align}
\label{D_23}
D_{23} &\ll t_3^{1-\sigma_1-\sigma_2-\sigma_3} + \abs{s_3} t_3^{1-\sigma_1-\sigma_2-\sigma_3} \left( t_3^{-\sigma_3-\frac{1}{2}} + t_3^{-\frac{1}{2}} \right) \nonumber \\
&\ll t_3^{\frac{3}{2}-\sigma_1-\sigma_2-\sigma_3} \left( t_3^{-\sigma_3} + 1 \right) 
\end{align}
in the region $\sigma_1+\sigma_2+\sigma_3>1, \sigma_1+\sigma_3>-\frac{1}{2}, \sigma_3>-\frac{3}{2}$ and $s_1+s_3 \neq 0$. 

We can estimate the term $D_{21}$ by the same method as in the case of $D_{23}$. Let $c \in \mathbb{R}$ with $\max \{1-\sigma_1-\sigma_3, -\sigma_3 \}<c<0$. Then we have
\begin{align*}
D_{21} &= \frac{1}{2 \pi i \Gamma(s_3)} \int_{at_3}^\infty \int_v^\infty \int_{(c)} \frac{\Gamma(s_3+z) \Gamma(-z)}{v^{s_2-z}u^{s_1+s_3+z}} dzdudv \\
&= \frac{(at_3)^{2-s_1-s_2-s_3}}{2 \pi i (s_1+s_2+s_3-2) \Gamma(s_3)} \int_{(c)} \frac{\Gamma(s_3+z) \Gamma(-z)}{s_1+s_3-1+z} dz.
\end{align*}
Shifting the path from $(c)$ to $\left( \frac{1}{2} \right)$, we have 
\begin{align*}
D_{21} &= \frac{(at_3)^{2-s_1-s_2-s_3}}{(s_1+s_2+s_3-2)(s_1+s_3-1)} \\
&\quad+\frac{(at_3)^{2-s_1-s_2-s_3}}{2 \pi i (s_1+s_2+s_3-2) \Gamma(s_3)} \int_{(\frac{1}{2})} \frac{\Gamma(s_3+z) \Gamma(-z)}{s_1+s_3-1+z} dz.
\end{align*}
Hence the term $D_{21}$ can be meromorphically continued to the region $\sigma_1+\sigma_3>\frac{1}{2}, \sigma_3>-\frac{1}{2}, s_1+s_3\neq 1$ and $s_1+s_2+s_3 \neq 2$, and from the Stirling formula and Lemma 4 of \cite{Ma2} we have
\begin{align}
\label{D_21}
D_{21} &\ll \frac{t_3^{2-\sigma_1-\sigma_2-\sigma_3}}{t_3^2} + \frac{t_3^{2-\sigma_1-\sigma_2-\sigma_3}}{t_3} ( t_3^{\frac{1}{2}-\sigma_3} + t_3^{-\frac{1}{2}} ) \nonumber \\
&\ll t_3^{\frac{3}{2}-\sigma_1-\sigma_2-\sigma_3} \left( t_3^{-\sigma_3} + t_3^{-1} \right). 
\end{align}

Summarizing the above results, we find that $D_2$ is analytic in the region $\sigma_1+\sigma_3>\frac{1}{2}, \sigma_1+\sigma_2+\sigma_3>1, \sigma_3>-\frac{1}{2}, s_1+s_3 \neq 1$ and $s_1+s_2+s_3 \neq 2$. From (\ref{D_24}), (\ref{D_22}), (\ref{D_23}) and (\ref{D_21}), we obtain
\begin{equation}
\label{D_2}
D_2 \ll t_3^{\frac{3}{2}-\sigma_1-\sigma_2-\sigma_3} \left( t_3^{-\sigma_3} + 1 \right).
\end{equation}

Next, we estimate $D_{1}$. First, using integration by parts, we have
\begin{align*}
D_{14} &= \frac{(at_3)^{1-s_1}(at_3+1)^{-s_3}}{s_1+s_3-1} + \frac{s_3}{s_1+s_3-1}\int_{at_3}^\infty \frac{1}{u^{s_1}(u+1)^{s_3+1}} du.
\end{align*}
Since the last integral is absolutely convergent when $\sigma_1+\sigma_3>0$, the term $D_{14}$ is analytic $\sigma_1+\sigma_3>0$ and $s_1+s_3 \neq 1$ and we have 
\begin{equation}
\label{D_14}
D_{14} \ll \frac{t_3^{1-\sigma_1-\sigma_3}}{t_3} +t_3^{-\sigma_1-\sigma_3} \ll t_3^{-\sigma_1-\sigma_3}.
\end{equation}

Next, using integration by parts, we have
\begin{align*}
D_{12} &= \frac{s_2 (at_3)^{1-s_1}}{s_1+s_3-1} \int_1^{at_3} \frac{v-[v]-\frac{1}{2}}{v^{s_2+1}(at_3+v)^{s_3}} dv \\
&+ \frac{s_2 s_3}{s_1+s_3-1} \int_1^{at_3} \frac{v-[v]-\frac{1}{2}}{v^{s_2}} \int_{at_3}^\infty \frac{1}{u^{s_1} (u+v)^{s_3+1}} dudv \\
&=D_{121}+D_{122},
\end{align*}
say. Then we have
\begin{equation}
\label{D_121}
D_{121} \ll t_3^{-\sigma_1-\sigma} \begin{cases}
1 & (\sigma_2>0) \\
\log t_3 & (\sigma_2=0) \\
t_3^{1-\sigma_2} & (\sigma_2<0), \\
\end{cases}
\end{equation}
when $s_1+s_3 \neq 1$. The term $D_{122}$ is absolutely convergent when $\sigma_1+\sigma_3>0$ and $s_1+s_3 \neq 1$, and in the same region we have 
\begin{equation}
\label{D_122}
D_{122} \ll \begin{cases}
t_3^{-\sigma_1-\sigma_3} & (\sigma_2>1) \\
t_3^{-\sigma_1-\sigma_3} \log t_3 & (\sigma_2=1) \\
t_3^{1-\sigma_1-\sigma_2-\sigma_3} & (\sigma_2<1). \\
\end{cases}
\end{equation}

Next we consider the term $D_{13}$. Let $c \in \mathbb{R}$ with $\max \{-\sigma_1-\sigma_3, -\sigma_3-1 \}<c<0$. Then we have
\begin{align*}
D_{13} &= s_3 \int_1^{at_3} \frac{v-[v]-\frac{1}{2}}{v^{s_2}} \int_{at_3}^\infty \frac{1}{u^{s_1} (u+v)^{s_3+1}} dudv \\
&= \frac{s_3}{2 \pi i \Gamma(s_3+1)} \int_1^{at_3} \int_{at_3}^\infty \int_{(c)} \frac{(v-[v]-\frac{1}{2}) \Gamma(s_3+1+z) \Gamma(-z)}{v^{s_2-z}u^{s_1+s_3+1+z}} dzdudv \\
&= \frac{s_3}{2 \pi i \Gamma(s_3+1)} \int_{(c)} \frac{\Gamma(s_3+1+z) \Gamma(-z)}{(at_3)^{s_1+s_3+z}(s_1+s_3+z)} \left( \int_1^{at_3} \frac{v-[v]-\frac{1}{2}}{v^{s_2-z}} dv \right) dz.
\end{align*}
Shifting the path $(c)$ to $\left( \frac{1}{2} \right)$, we have  
\begin{align*}
D_{13} &= \frac{s_3}{(at_3)^{s_1+s_3}(s_1+s_3)} \int_1^{at_3} \frac{v-[v]-\frac{1}{2}}{v^{s_2}} dv \\
&\quad+ \frac{s_3}{2 \pi i \Gamma(s_3+1)} \int_{(\frac{1}{2})} \frac{\Gamma(s_3+1+z) \Gamma(-z)}{(at_3)^{s_1+s_3+z}(s_1+s_3+z)} \left( \int_1^{at_3} \frac{v-[v]-\frac{1}{2}}{v^{s_2-z}} dv \right) dz \\
&= D_{131}+D_{132},
\end{align*}
say. Then we have
\begin{equation}
\label{D_131}
D_{131} \ll t_3^{-\sigma_1-\sigma_3}\begin{cases}
1 & (\sigma_2>1) \\
\log t_3 & (\sigma_2=1) \\
t_3^{1-\sigma_2} & (\sigma_2<1),  \\
\end{cases}
\end{equation}
when $s_1+s_3 \neq 0$. The term $D_{132}$ is meromorphically continued to the region $\sigma_1+\sigma_3>-\frac{1}{2}$ and $\sigma_3>-\frac{3}{2}$. and we have
\begin{equation}
\label{D_132}
D_{132} \ll t_3^{-\sigma_1-\sigma_3} \left( t_3^{-\sigma_3} +1 \right) \begin{cases}
1 & (\sigma_2>\frac{3}{2}) \\
\log t_3 & (\sigma_2=\frac{3}{2}) \\
t_3^{\frac{3}{2}-\sigma_2} & (\sigma_2<\frac{3}{2}) \\
\end{cases} \\
\end{equation}
in the region $\sigma_1+\sigma_3>-\frac{1}{2}, \sigma_3>-\frac{3}{2}$ and $s_1+s_3 \neq 0$.

Finally we estimate the term $D_{11}$. Using integration by parts, we have
\begin{align*}
D_{11} &= \frac{(at_3)^{1-s_1}}{s_1+s_3-1} \int_1^{at_3} \frac{1}{v^{s_2}(at_3+v)^{s_3}} dv \\
&+  \frac{s_3}{s_1+s_3-1} \int_1^{at_3} \frac{1}{v^{s_2-1}} \int_{at_3}^\infty \frac{1}{u^{s_1} (u+v)^{s_3+1}} dudv \\
&= D_{111}+D_{112},
\end{align*}
say.  Then we have
\begin{equation}
\label{D_111}
D_{111} \ll \frac{t_3^{1-\sigma_1}}{t_3} \int_1^{at_3} \frac{1}{v^{\sigma_2}(at_3+v)^{\sigma_3}} dv \ll \begin{cases}
t_3^{-\sigma_1-\sigma_3} & (\sigma_2>1) \\
t_3^{-\sigma_1-\sigma_3} & (\sigma_2=1) \\
t_3^{1-\sigma_1-\sigma_2-\sigma_3} & (\sigma_2<1) \\
\end{cases}
\end{equation}
when $s_1+s_3 \neq 1$.

Similar to $D_{13}$, we find that the term $D_{112}$ can be meromorphically continued to the region $\sigma_1+\sigma_3>-\frac{1}{2}, \sigma_3>-\frac{3}{2}$ and $s_1+s_3 \neq 0,1$, and in the same region we have
\begin{align}
\label{D_112}
D_{112} 
&\ll \left( t_3^{-\sigma_3} + 1 \right) \begin{cases}
t_3^{-\sigma_1-\sigma_3-1} & (\sigma_2>\frac{5}{2}) \\
t_3^{-\sigma_1-\sigma_3-1}\log t_3 & (\sigma_2=\frac{5}{2}) \\
t_3^{\frac{3}{2}-\sigma_1-\sigma_2-\sigma_3} & (\sigma_2<\frac{5}{2}). \\
\end{cases}
\end{align}

Therefore, by (\ref{D_14}), (\ref{D_121}), (\ref{D_122}), (\ref{D_131}), (\ref{D_132}), (\ref{D_111}), (\ref{D_112}), $D_1$ is analytic in the region $\sigma_1+\sigma_3>0, \sigma_3>-\frac{3}{2}$ and $s_1+s_3 \neq 1$, and in same region we have
\begin{equation}
\label{D_1}
D_1 \ll ( t_3^{-\sigma_3} +1 )\begin{cases}
t_3^{-\sigma_1-\sigma_3}  & (\sigma_2>\frac{3}{2}) \\
t_3^{-\sigma_1-\sigma_3} \log t_3 & (\sigma_2=\frac{3}{2}) \\
t_3^{\frac{3}{2}-\sigma_1-\sigma_2-\sigma_3} & (\sigma_2<\frac{3}{2}). \\
\end{cases}
\end{equation}

From (\ref{AV_2}), (\ref{D_2}), (\ref{D_1}), we have 
\begin{align}
\zeta_{AV,2} (s_1,s_2,s_3) &= \sum_{m \leq at_3} \sum_{n<m} \frac{1}{m^{s_1}n^{s_2}(m+n)^{s_3}} + D_1+D_2 \\
&+\sum_{n \leq at_3} \frac{E(s_1,s_3;at_3,n,M)}{n^{s_2}}+\sum_{at_3 < n \leq N} \frac{E(s_1,s_3;n,M)}{n^{s_2}} \nonumber \\
&-B_2-B_3-B_4-C_2-C_3-C_4 \nonumber
\end{align}
in the region $\sigma_1+\sigma_3>\frac{1}{2}, \sigma_1+\sigma_2+\sigma_3>\frac{3}{2}, \sigma_1 \geq 0, \sigma_3>0, t_1 \geq 0, t_3 \geq 2, s_1+s_3 \neq 1$ and $s_1+s_2+s_3 \neq 2$. 

Similar to the proof of Theorem \ref{thm:approximation1}, in the above region we also have 
\begin{align*}
&\sum_{n \leq at_3} \frac{E(s_1,s_3;at_3,n,M)}{n^{s_2}}+\sum_{at_3 < n \leq N} \frac{E(s_1,s_3;n,M)}{n^{s_2}} \nonumber \\
&-B_2-B_3-B_4-C_2-C_3-C_4 \\
& \ll \begin{cases}
t_3^{-\sigma_1-\sigma_3}  & (\sigma_2>1) \\
t_3^{-\sigma_1-\sigma_3} \log t_3 & (\sigma_2=1) \\
t_3^{1-\sigma_1-\sigma_2-\sigma_3} & (\sigma_2<1). \\
\end{cases}
\end{align*}
Since  $\sigma_3 > 0$, we find that $t_3^{-\sigma_3} \ll 1$. Then we obtain
\[
D_1+D_2 \ll \begin{cases}
t_3^{-\sigma_1-\sigma_3}  & (\sigma_2>\frac{3}{2}) \\
t_3^{-\sigma_1-\sigma_3} \log t_3 & (\sigma_2=\frac{3}{2}) \\
t_3^{\frac{3}{2}-\sigma_1-\sigma_2-\sigma_3} & (\sigma_2<\frac{3}{2}). \\
\end{cases}
\]
Thus we complete the proof of Theorem \ref{thm:approximation2}.  
\end{proof}

\section{Proofs of the main results}\label{sec3}

First, we give the following lemma to prove Theorem \ref{thm:first thm}. 
\begin{lemma}({\cite[Corollary 3]{MV}})
\label{lem:dirichlet poly}
For $a_1,a_2,\cdots,a_N \in \mathbb{C}$, we have
\[
\int_2^T \abs*{ \sum_{n \leq N} a_n n^{it} }^2 dt = T \sum_{n \leq N} \abs{a_n}^2 +O\left( \sum_{n \leq N} n \abs{a_n}^2 \right) \ \ \ (T \to \infty).
\]
and the above formula remains valid if $N=\infty$, provided that the series on the right-hand side of the above formula converge. 
\end{lemma}

\begin{proof}[Proof of Theorem \ref{thm:first thm}]
Let $s_j \in \mathbb{C}$ ($j=1,2,3$) with $\sigma_1+\sigma_3>1$ and $\sigma_1+\sigma_2+\sigma_3>2$. Then we have
\begin{align*}
\zeta_{AV,2} (s_1,s_2,s_3) &= \sum_{k=3}^\infty \left( \sum_{k/2 <m \leq k-1} \frac{1}{m^{s_1}(k-m)^{s_2}k^{\sigma_3}} \right) \frac{1}{k^{it_3}} \nonumber \\
&= \overline{\sum_{k=3}^\infty \overline{a_k} k^{it_3}} \nonumber \\
\end{align*}
where
\[
a_k = \sum_{k/2 <m \leq k-1} \frac{1}{m^{s_1}(k-m)^{s_2}k^{\sigma_3}} .
\]
Then we have
\[
\sum_{k=3}^\infty k \abs{a_k}^2 = \sum_{k=3}^\infty \abs*{ \sum_{k/2 <m \leq k-1} \frac{1}{m^{s_1}(k-m)^{s_2}} }^2 k^{1-2\sigma_3} = \zeta_{AV,2}^{[2]} (s_1,s_2,2\sigma_3-1). 
\]
By Theorem \ref{thm:series conv}, the series $\zeta_{AV,2}^{[2]} (s_1,s_2,2\sigma_3-1)$ is absolutely convergent in the region $\sigma_1+\sigma_3>1$, $\sigma_1+\sigma_2+\sigma_3>2$. Hence we can obtain $\sum_{k=2}^\infty k \abs{a_k}^2 = O(1)$. Also we have 
\[
\sum_{k=2}^\infty \abs{a_k}^2 = \zeta_{AV,2}^{[2]} (s_1,s_2,2\sigma_3). 
\]
Therefore, by Lemma \ref{lem:dirichlet poly}, we complete the proof of Theorem \ref{thm:first thm}. 
\end{proof}

Next, we prove Theorem \ref{thm:second thm} that is a mean theorem outside the region of absolute convergence. 

\begin{proof}[Proof of Theorem \ref{thm:second thm}]
Assume that $\sigma_1\geq 0, \sigma_3>0, \frac{1}{2} < \sigma_1+\sigma_3\leq1, \sigma_1+\sigma_2+\sigma_3>2, t_3 \geq 2, t_1 \geq 0$ and $s_1+s_3 \neq 1$. Let $a=\max \{1, t_1 \}, x =y= at_3$ and $\kappa=\frac{4\pi}{3}$ in Theorem \ref{thm:approximation1}. Then we have
\begin{align}
\label{+*+}
&\zeta_{AV,2} (s_1,s_2,s_3) \\
&= \sum_{n \leq at_3} \sum_{n<m\leq at_3} \frac{1}{m^{s_1}n^{s_2}(m+n)^{s_3}} \nonumber \\
&+\frac{(at_3)^{1-s_1}}{s_1+s_3-1} \sum_{n \leq at_3} \frac{1}{n^{s_2}(at_3+n)^{s_3}} +\frac{s_3}{s_1+s_3-1} \sum_{n \leq at_3} \frac{1}{n^{s_2-1}} \int_{at_3}^\infty \frac{du}{u^{s_1}(u+n)^{s_3+1}} \nonumber \\
&+\frac{2^{-s_3}}{s_1+s_3-1} \sum_{n>at_3} \frac{1}{n^{s_1+s_2+s_3-1}} +\frac{s_3}{s_1+s_3-1} \sum_{n>at_3} \frac{1}{n^{s_2-1}} \int_n^\infty \frac{du}{u^{s_1}(u+n)^{s_3+1}} \nonumber \\
&+\begin{cases}
O \left( t_3^{-\sigma_1-\sigma_3} \right) & (\sigma_2>1) \\
O \left( t_3^{-\sigma_1-\sigma_3} \log t_3 \right) & (\sigma_2=1) \\
O \left( t_3^{1-\sigma_1-\sigma_2-\sigma_3} \right) & (\sigma_2<1). \\
\end{cases} \nonumber
\end{align}
As for the second term on the right hand side in (\ref{+*+}), we have 
\begin{align*}
\frac{(at_3)^{1-s_1}}{s_1+s_3-1} \sum_{n \leq at_3} \frac{1}{n^{s_2}(at_3+n)^{s_3}} &\ll \begin{cases}
t_3^{-\sigma_1-\sigma_3} & (\sigma_2>1) \\
t_3^{-\sigma_1-\sigma_3} \log t_3 & (\sigma_2=1) \\
t_3^{1-\sigma_1-\sigma_2-\sigma_3} & (\sigma_2<1). \\
\end{cases}
\end{align*}
Also, we find that
\begin{align*} 
\frac{2^{-s_3}}{s_1+s_3-1} \sum_{n>at_3} \frac{1}{n^{s_1+s_2+s_3-1}} &\ll t_3^{1-\sigma_1-\sigma_2-\sigma_3}.
\end{align*}
The third term and the fifth term are estimated as
\begin{align*}
\frac{s_3}{s_1+s_3-1} \sum_{n \leq at_3}  \frac{1}{n^{s_2-1}} \int_{at_3}^\infty \frac{du}{u^{s_1}(u+n)^{s_3+1}} &\ll \begin{cases}
t_3^{-\sigma_1-\sigma_3} & (\sigma_2>2) \\
t_3^{-\sigma_1-\sigma_3} \log t_3 & (\sigma_2=2) \\
t_3^{2-\sigma_1-\sigma_2-\sigma_3} & (\sigma_2<2), \\
\end{cases}
\end{align*}
and 
\begin{align*}
\frac{s_3}{s_1+s_3-1} \sum_{n>at_3} \frac{1}{n^{s_2-1}} \int_n^\infty \frac{du}{u^{s_1}(u+n)^{s_3+1}} &\ll t_3^{2-\sigma_1-\sigma_2-\sigma_3}.
\end{align*}

In the case $\sigma_2<2$, we can develop more elaborate arguments for the third and the fifth terms by using the method of Okamoto and Onozuka \cite{OO}. 

By the Mellin-Barnes integral formula, the third term is transformed as
\begin{align*}
& \frac{s_3}{s_1+s_3-1} \sum_{n \leq at_3} \frac{1}{n^{s_2-1}} \int_{at_3}^\infty \frac{du}{u^{s_1}(u+n)^{s_3+1}} \\
&= \frac{s_3}{2\pi i (s_1+s_3-1) \Gamma(s_3+1)}\sum_{n \leq at_3} \int_{at_3}^\infty \int_{(c)} \frac{\Gamma(s_3+1+z)\Gamma(-z)}{u^{s_1+s_3+1+z} n^{s_2-1-z}} dz du, \\
\end{align*}
where $-\sigma_1-\sigma_3<c<0$. Since $\sigma_1+\sigma_3+c>0$, we can change the order of the integration and summation. Then we have
\begin{align*}
& \frac{s_3}{2\pi i (s_1+s_3-1) \Gamma(s_3+1)} \\
& \times \int_{(c)} \frac{\Gamma(s_3+1+z)\Gamma(-z)}{(s_1+s_3+z) (at_3)^{s_1+s_3+z}} \left( \sum_{n \leq at_3} \frac{1}{n^{s_2-1-z}} \right) dz .
\end{align*}
Shifting the path $(c)$ to $\left( \frac{1}{2} \right)$, the above formula is equal to
\begin{align*}
&= \frac{s_3}{(s_1+s_3-1)(s_1+s_3)(at_3)^{s_1+s_3}} \sum_{n \leq at_3} \frac{1}{n^{s_2-1}} \\
&+ \frac{s_3}{2\pi i (s_1+s_3-1) \Gamma(s_3+1)} \int_{\left( \frac{1}{2} \right)} \frac{\Gamma(s_3+1+z)\Gamma(-z)}{(s_1+s_3+z) (at_3)^{s_1+s_3+z}} \left( \sum_{n \leq at_3} \frac{1}{n^{s_2-1-z}} \right) dz.
\end{align*}
Since $\sigma_2<2$, we see that
\begin{align*}
\frac{s_3}{(s_1+s_3-1)(s_1+s_3)} (at_3)^{-s_1-s_3} \sum_{n \leq at_3} \frac{1}{n^{s_2-1}} &\ll t_3^{1-\sigma_1-\sigma_2-\sigma_3},
\end{align*}
and since $\sigma_3>0$, we have
\begin{align*}
&\frac{s_3}{2\pi i (s_1+s_3-1) \Gamma(s_3+1)} \int_{\left( \frac{1}{2} \right)} \frac{\Gamma(s_3+1+z)\Gamma(-z)}{(s_1+s_3+z) (at_3)^{s_1+s_3+z}} \sum_{n \leq at_3} \frac{1}{n^{s_2-1-z}} dz \\
&\quad \ll t_3^{-\sigma_1-\sigma_3-\frac{1}{2}} \left( \sum_{n \leq at_3} \frac{1}{n^{\sigma_2-\frac{3}{2}}} \right) \frac{1}{\abs{\Gamma(s_3+1)}} \int_{\left( \frac{1}{2} \right)} \abs*{ \frac{\Gamma(s_3+1+z)\Gamma(-z)}{s_1+s_3+z}} dz \\
&\quad \ll t_3^{\frac{3}{2}-\sigma_1-\sigma_2-\sigma_3}.
\end{align*}

Similarly we estimate the fifth term in the case $\sigma_2<2$. Let $-\sigma_1-\sigma_3<c<\sigma_2-2$, then we have
\begin{align*}
& \frac{s_3}{2\pi i (s_1+s_3-1) \Gamma(s_3+1)}\sum_{n>at_3} \int_n^\infty \int_{(c)} \frac{\Gamma(s_3+1+z)\Gamma(-z)}{u^{s_1+s_3+1+z} n^{s_2-1-z}} dz du. \\
&= \frac{s_3}{2\pi i (s_1+s_3-1) \Gamma(s_3+1)} \sum_{n>at_3} \frac{1}{n^{s_1+s_2+s_3-1}} \int_{(c)} \frac{\Gamma(s_3+1+z)\Gamma(-z)}{s_1+s_3+z} dz .
\end{align*}
Shifting the path $(c)$ to $\left( \frac{1}{2} \right)$, the above formula is equal to
\begin{align*}
&\frac{s_3}{(s_1+s_3-1)(s_1+s_3)} \sum_{n>at_3} \frac{1}{n^{s_1+s_2+s_3-1}} \\
&+ \frac{s_3}{2\pi i (s_1+s_3-1) \Gamma(s_3+1)} \sum_{n>at_3} \frac{1}{n^{s_1+s_2+s_3-1}}\int_{\left( \frac{1}{2} \right)} \frac{\Gamma(s_3+1+z)\Gamma(-z)}{s_1+s_3+z} dz .
\end{align*}
Then we have 
\begin{align*}
\frac{s_3}{(s_1+s_3-1)(s_1+s_3)} \sum_{n>at_3} \frac{1}{n^{s_1+s_2+s_3-1}} &\ll t_3^{1-\sigma_1-\sigma_2-\sigma_3},
\end{align*}
and 
\begin{align*}
&\frac{s_3}{2\pi i (s_1+s_3-1) \Gamma(s_3+1)} \sum_{n>at_3} \frac{1}{n^{s_1+s_2+s_3-1}}\int_{\left( \frac{1}{2} \right)} \frac{\Gamma(s_3+1+z)\Gamma(-z)}{s_1+s_3+z} dz \\ 
&\ll t_3^{\frac{3}{2}-\sigma_1-\sigma_2-\sigma_3}
\end{align*}
since $\sigma_3>0$.

Summarizing the above results, we find that
\begin{align}
\label{approximation1-1}
\zeta_{AV,2} (s_1,s_2,s_3) &= \sum_{m \leq at_3} \sum_{n<m} \frac{1}{m^{s_1}n^{s_2}(m+n)^{s_3}}+ \begin{cases}
O (t_3^{-\sigma_1-\sigma_3} ) & (\sigma_2>2) \\
O (t_3^{-\sigma_1-\sigma_3} \log t_3 ) & (\sigma_2=2) \\
O (t_3^{\frac{3}{2}-\sigma_1-\sigma_2-\sigma_3} ) & (\sigma_2<2). \\
\end{cases}
\end{align}

Now we denote the main term of (\ref{approximation1-1}) by $\Sigma_1(s_1,s_2,s_3)$ and the error term by $E(s_1,s_2,s_3)$, respectively. Then, for $M(m_1,m_2)=\max \{ m_1/a,m_2/a,2\}$,
\begin{align}
\label{w_1Tw_2+w_3}
& \int_2^T \abs{\Sigma_1(s_1,s_2,s_3)}^2 dt_3 \\
&= \int_2^T \sum_{m_1,m_2 \leq at_3} \sum_{\substack{n_1<m_1 \\ n_2<m_2}} \frac{1}{m_1^{s_1}m_2^{\overline{s_2}}n_1^{s_2}n^{\overline{s_2}}(m_1+n_1)^{\sigma_3+it_3}(m_2+n_2)^{\sigma_3-it_3}} dt_3 \nonumber \\
&= \sum_{m_1,m_2 \leq aT} \sum_{\substack{n_1<m_1 \\ n_2<m_2}} \frac{1}{m_1^{s_1}m_2^{\overline{s_2}}n_1^{s_2}n_2^{\overline{s_2}}(m_1+n_1)^{\sigma_3}(m_2+n_2)^{\sigma_3}} \nonumber \\ 
&\times \int_{M(m_1,m_2)}^T \left( \frac{m_2+n_2}{m_1+n_1} \right)^{it_3} dt_3 \nonumber \\
&= \sum_{m_1,m_2 \leq aT} \sum_{\substack{n_1<m_1,n_2<m_2 \\ m_1+n_1=m_2+n_2}} \frac{1}{m_1^{s_1}m_2^{\overline{s_1}}n_1^{s_2}n_2^{\overline{s_2}}(m_1+n_1)^{2\sigma_3}} \left( T -M(m_1,m_2) \right) \nonumber \\
&\quad+ \sum_{m_1,m_2 \leq aT} \sum_{\substack{n_1<m_1,n_2<m_2 \\ m_1+n_1=m_2+n_2}} \frac{1}{m_1^{s_1}m_2^{\overline{s_2}}n_1^{s_2}n_2^{\overline{s_2}}(m_1+n_1)^{\sigma_3}(m_2+n_2)^{\sigma_3}} \nonumber \\
&\qquad \times \frac{e^{iT \log \left( \frac{m_2+n_2}{m_1+n_1} \right)}-e^{iM(m_1,m_2) \log \left( \frac{m_2+n_2}{m_1+n_1} \right)}}{i \log \left( \frac{m_2+n_2}{m_1+n_1}\right)} \nonumber \\
&=W_1 T-W_2+W_3, \nonumber 
\end{align}
say. 

First we calculate $W_1$.
\begin{align*}
W_1 &= \sum_{m_1,m_2 \leq aT} \sum_{\substack{n_1<m_1 \\ n_2<m_2 \\ m_1+n_1=m_2+n_2}} \frac{1}{m_1^{s_1}m_2^{\overline{s_1}}n_1^{s_2}n_2^{\overline{s_2}}(m_1+n_1)^{2\sigma_3}} \\
&= \left( \sum_{m_1,m_2 \geq 1} \sum_{\substack{n_1<m_1 \\ n_2<m_2 \\ m_1+n_1=m_2+n_2}}-\sum_{\substack{m_1>aT \\ m_2 \leq aT}} \sum_{\substack{n_1<m_1 \\ n_2<m_2 \\ m_1+n_1=m_2+n_2}} \right. \\
& \left. -\sum_{\substack{m_1\leq aT \\ m_2 > aT}} \sum_{\substack{n_1<m_1 \\ n_2<m_2 \\ m_1+n_1=m_2+n_2}}-\sum_{m_1,m_2 > aT} \sum_{\substack{n_1<m_1 \\ n_2<m_2 \\ m_1+n_1=m_2+n_2}} \right) \\
&\times \frac{1}{m_1^{s_1}m_2^{\overline{s_1}}n_1^{s_2}n_2^{\overline{s_2}}(m_1+n_1)^{2\sigma_3}} \\
&= \sum_{m_1,m_2 \geq 1} \sum_{\substack{n_1<m_1 \\ n_2<m_2 \\ m_1+n_1=m_2+n_2}} \frac{1}{m_1^{s_1}m_2^{\overline{s_1}}n_1^{s_2}n_2^{\overline{s_2}}(m_1+n_1)^{2\sigma_3}} -U_1-U_2-U_3, \\
\end{align*}
say. Then the first term is identical with $\zeta_{AV,2}^{[2]} (s_1,s_2,2\sigma_3)$. Note that $U_1+U_2+U_3=(U_1+U_3)+(U_2+U_3)-U_3$.
We see that $n_2<\frac{m_1+n_1}{2}$ since $n_2<m_2=m_1+n_1-n_2$, and also see that $\sigma_2>1$ since $1 \geq \sigma_1+\sigma_3>\frac{1}{2}$ and $\sigma_1+\sigma_2+\sigma_3>2$, and hence we have  
\begin{align*}
U_1+U_3 &\ll \sum_{\substack{m_1>aT \\ m_2 \geq 1}} \sum_{\substack{n_1<m_1 \\ n_2<m_2 \\ m_1+n_1=m_2+n_2}} \frac{1}{m_1^{\sigma_1}m_2^{\sigma_1}n_1^{\sigma_2}n_2^{\sigma_2}(m_1+n_1)^{2\sigma_3}} \\
&\ll \sum_{\substack{m_1>aT \\ n_1 < m_1}} \sum_{n_2 <\frac{m_1+n_1}{2}} \frac{1}{m_1^{\sigma_1}(m_1+n_1-n_2)^{\sigma_1}n_1^{\sigma_2}n_2^{\sigma_2}(m_1+n_1)^{2\sigma_3}} \\
&\ll \sum_{\substack{m_1>aT \\ n_1 < m_1}} \frac{1}{m_1^{\sigma_1}n_1^{\sigma_2}(m_1+n_1)^{\sigma_1+2\sigma_3}} \sum_{n_2 <\frac{m_1+n_1}{2}} \frac{1}{n_2^{\sigma_2}} \\
&\ll \sum_{\substack{m_1>aT \\ n_1 < m_1}} \frac{1}{m_1^{\sigma_1}n_1^{\sigma_2}(m_1+n_1)^{\sigma_1+2\sigma_3}} \\
&\ll \sum_{m_1>aT} \frac{1}{m_1^{2\sigma_1+2\sigma_3}} \sum_{n_1 < m_1} \frac{1}{n_1^{\sigma_2}} \\
&\ll \sum_{m_1>aT} \frac{1}{m_1^{2\sigma_1+2\sigma_3}} \\
&\ll T^{1-2\sigma_1-2\sigma_3}.
\end{align*}

Similarly we have $U_2+U_3 , U_3= O \left( T^{1-2\sigma_1-2\sigma_3} \right)$. Then we obtain
\begin{equation}
\label{w_1 T}
W_1 T = \zeta_{AV,2}^{[2]} (s_1,s_2,2\sigma_3)T+ O \left( T^{2-2\sigma_1-2\sigma_3} \right).
\end{equation}

Next, we estimate $W_2$. Since $M(m_1,m_2) \leq m_1+n_1 (=m_2+n_2)$ and $n_2 <\frac{m_1+n_1}{2}, \sigma_2>1$, we have
\begin{align}
\label{w_2}
W_2 &\ll \sum_{m_1,m_2 \leq aT} \sum_{\substack{n_1<m_1 \\ n_2<m_2 \\ m_1+n_1 =m_2+n_2}} \frac{1}{m_1^{\sigma_1}m_2^{\sigma_1}n_1^{\sigma_2}n_2^{\sigma_2}(m_1+n_1)^{2\sigma_3-1}} \nonumber \\
&\ll \sum_{\substack{m_1 \leq aT \\ n_1<m_1}} \sum_{n_2<\frac{m_1+n_1}{2}} \frac{1}{m_1^{\sigma_1}(m_1+n_1-n_2)^{\sigma_1}n_1^{\sigma_2}n_2^{\sigma_2}(m_1+n_1)^{2\sigma_3-1}} \nonumber \\
&\ll \sum_{\substack{m_1 \leq aT \\ n_1 < m_1}} \frac{1}{m_1^{\sigma_1}n_1^{\sigma_2}(m_1+n_1)^{\sigma_1+2\sigma_3-1}} \sum_{n_2 <\frac{m_1+n_1}{2}} \frac{1}{n_2^{\sigma_2}} \nonumber \\
&\ll \sum_{\substack{m_1 \leq aT \\ n_1 < m_1}} \frac{1}{m_1^{\sigma_1}n_1^{\sigma_2}(m_1+n_1)^{\sigma_1+2\sigma_3-1}} \nonumber \\
&\ll \sum_{m_1 \leq aT} \frac{1}{m_1^{2\sigma_1+2\sigma_3-1}} \sum_{n_1 < m_1} \frac{1}{n_1^{\sigma_2}} \nonumber \\
&\ll \sum_{m_1 \leq aT} \frac{1}{m_1^{2\sigma_1+2\sigma_3-1}} \nonumber \\
&\ll T^{2-2\sigma_1-2\sigma_3}.
\end{align}

Finally we estimate $W_3$.
\begin{align*}
W_3 &= \sum_{m_1,m_2 \leq aT} \sum_{\substack{n_1<m_1 \\ n_2<m_2 \\ m_1+n_1 \neq m_2+n_2}} \frac{1}{m_1^{s_1}m_2^{\overline{s_1}}n_1^{s_2}n_2^{\overline{s_2}}(m_1+n_1)^{\sigma_3}(m_2+n_2)^{\sigma_3}} \\
&\quad \times \frac{e^{iT \log \left( \frac{m_2+n_2}{m_1+n_1} \right)}-e^{iM(m_1,m_2) \log \left( \frac{m_2+n_2}{m_1+n_1} \right)}}{i \log \left( \frac{m_2+n_2}{m_1+n_1}\right)} \\
&\ll \sum_{m_1,m_2 \leq aT} \sum_{\substack{n_1<m_1 \\ n_2<m_2 \\ m_1+n_1 < m_2+n_2 \leq 2(m_1+n_1)}} \frac{1}{m_1^{\sigma_1}m_2^{\sigma_1}n_1^{\sigma_2}n_2^{\sigma_2}} \\
& \times \frac{1}{(m_1+n_1)^{\sigma_3}(m_2+n_2)^{\sigma_3}} \frac{1}{\log \left( \frac{m_2+n_2}{m_1+n_1}\right)} \\
&+ \sum_{m_1,m_2 \leq aT} \sum_{\substack{n_1<m_1 \\ n_2<m_2 \\ 2(m_1+n_1)< m_2+n_2}} \frac{1}{m_1^{\sigma_1}m_2^{\sigma_1}n_1^{\sigma_2}n_2^{\sigma_2}} \\
&\times \frac{1}{(m_1+n_1)^{\sigma_3}(m_2+n_2)^{\sigma_3}} \frac{1}{\log \left( \frac{m_2+n_2}{m_1+n_1}\right)}.
\end{align*}
We denote the first and the second terms by $W_{32}$ and $W_{31}$, respectively. As for $W_{32}$, we have
\begin{align}
\label{w_32}
W_{32} &\ll \sum_{m_1,m_2 \leq aT} \sum_{\substack{n_1<m_1 \\ n_2<m_2}} \frac{1}{m_1^{\sigma_1}m_2^{\sigma_1}n_1^{\sigma_2}n_2^{\sigma_2}(m_1+n_1)^{\sigma_3}(m_2+n_2)^{\sigma_3}} \\
&\ll \left( \sum_{m \leq aT} \frac{1}{m^{\sigma_1}} \sum_{n<m} \frac{1}{n^{\sigma_2}(m+n)^{\sigma_3}} \right)^2 \nonumber \\
&\ll \left( \sum_{m \leq aT} \frac{1}{m^{\sigma_1+\sigma_3}} \sum_{n<m} \frac{1}{n^{\sigma_2}} \right)^2 \nonumber \\
&\ll \left( \begin{cases}
T^{1-\sigma_1-\sigma_3} & (\frac{1}{2}<\sigma_1+\sigma_3<1) \\
\log T & (\sigma_1+\sigma_3=1) \\
\end{cases}
\right)^2 \nonumber \\
&\ll \begin{cases}
T^{2-2\sigma_1-2\sigma_3} & (\frac{1}{2}<\sigma_1+\sigma_3<1) \\
(\log T)^2 & (\sigma_1+\sigma_3=1). \\
\end{cases} \nonumber 
\end{align}

As for $W_{31}$, setting $r=m_2+n_2-m_1-n_1$, we have $m_2=r+m_1+n_1-n_2$, and by $n_2<\frac{r+m_1+n_1}{2}$ and $\log (1+x) \asymp x$ ($\abs{x}<1$), we have 
\begin{align}
\label{w_31}
W_{31} &\ll \sum_{\substack{m_1 \leq aT \\ n_1<m_1}} \frac{1}{m_1^{\sigma_1}n_1^{\sigma_2}(m_1+n_1)^{\sigma_3}} \sum_{r \leq m_1+n_1} \sum_{n_2 <\frac{r+m_1+n_1}{2}} \\
& \times \frac{1}{n_2^{\sigma_2}(r+m_1+n_1-n_2)^{\sigma_1}(m_1+n_1+r)^{\sigma_3}} \frac{1}{\log \left( 1+\frac{r}{m_1+n_1}\right)} \nonumber \\
&\ll \sum_{\substack{m_1 \leq aT \\ n_1<m_1}} \frac{1}{m_1^{\sigma_1}n_1^{\sigma_2}(m_1+n_1)^{\sigma_3}} \sum_{r \leq m_1+n_1} \sum_{n_2 <\frac{r+m_1+n_1}{2}} \nonumber \\
&\times \frac{1}{n_2^{\sigma_2}(r+m_1+n_1-n_2)^{\sigma_1}(m_1+n_1+r)^{\sigma_3}} \frac{m_1+n_1}{r} \nonumber \\
&\ll \sum_{\substack{m_1 \leq aT \\ n_1<m_1}} \frac{1}{m_1^{\sigma_1}n_1^{\sigma_2}(m_1+n_1)^{\sigma_3-1}} \nonumber \\ 
&\quad +\sum_{r \leq m_1+n_1} \frac{1}{r(m_1+n_1+r)^{\sigma_1+\sigma_3}} \sum_{n_2 <\frac{r+m_1+n_1}{2}} \frac{1}{n_2^{\sigma_2}} \nonumber \\
&\ll \sum_{\substack{m_1 \leq aT \\ n_1<m_1}} \frac{1}{m_1^{\sigma_1}n_1^{\sigma_2}(m_1+n_1)^{\sigma_1+2\sigma_3-1}} \sum_{r \leq m_1+n_1} \frac{1}{r} \nonumber \\
&\ll \sum_{\substack{m_1 \leq aT \\ n_1<m_1}} \frac{\log (m_1+n_1)}{m_1^{\sigma_1}n_1^{\sigma_2}(m_1+n_1)^{\sigma_1+2\sigma_3-1}} \nonumber \\
&\ll \sum_{m_1 \leq aT} \frac{1}{m_1^{\sigma_1}} \sum_{n_1<m_1} \frac{\log (m_1+n_1)}{n_1^{\sigma_2}(m_1+n_1)^{\sigma_1+2\sigma_3-1}} \nonumber \\
&\ll \sum_{m_1 \leq aT} \frac{\log m_1}{m_1^{2\sigma_1+2\sigma_3-1}} \sum_{n_1<m_1} \frac{1}{n_1^{\sigma_2}} \nonumber \\
&\ll \log T \sum_{m_1 \leq aT} \frac{1}{m_1^{2\sigma_1+2\sigma_3-1}} \nonumber \\
&\ll \begin{cases}
T^{2-2\sigma_1-2\sigma_3} \log T & (\frac{1}{2} < \sigma_1+\sigma_3 <1) \\
(\log T)^2 & (\sigma_1+\sigma_3=1). \\
\end{cases} \nonumber
\end{align}

Therefore by (\ref{w_1 T}), (\ref{w_2}), (\ref{w_32}) and (\ref{w_31}), we obtain
\begin{align*}
\int_2^T \abs{\Sigma_1 (s_1,s_2,s_3)}^2 dt_3 &=\zeta_{AV,2}^{[2]} (s_1,s_2,2\sigma_3)T \\
&+ \begin{cases}
O \left( T^{2-2\sigma_1-2\sigma_3} \log T \right) & (\frac{1}{2} < \sigma_1+\sigma_3 <1) \\
O (\log T)^2 & (\sigma_1+\sigma_3=1). \\
\end{cases}
\end{align*}

Then
\begin{align*}
&\int_2^T \abs{\zeta_{AV,2} (s_1,s_2,s_3) }^2 dt_3 \\
&= \int_2^T \abs{\Sigma_1 (s_1,s_2,s_3) + E(s_1,s_2,s_3)}^2 dt_3 \\
&= \int_2^T \abs{\Sigma_1 (s_1,s_2,s_3)}^2 dt_3 + O \left( \int_2^T \abs{\Sigma_1(s_1,s_2,s_3)E(s_1,s_2,s_3)} dt_3 \right) \\
&\quad+O \left( \int_2^T \abs{E(s_1,s_2,s_3)}^2 dt_3 \right).  
\end{align*}
Since $\frac{1}{2}<\sigma_1+\sigma_3\leq 1$ and $\sigma_1+\sigma_2+\sigma_3>2$, we see that
\begin{align*}
\int_2^T \abs{E(s_1,s_2,s_3)}^2 dt_3 &= \begin{cases}
O \left( \int_2^T t_3^{-2\sigma_1-2\sigma_3} dt_3 \right) & (\sigma_2 >2) \\
O \left( \int_2^T t_3^{-2\sigma_1-2\sigma_3} (\log T)^2 dt_3 \right) & (\sigma_2=2) \\
O \left( \int_2^T t_3^{3-2\sigma_1-2\sigma_2-2\sigma_3} dt_3 \right) & (\sigma_2<2) \\
\end{cases} \\
&=O(1), 
\end{align*}
and 
\begin{align*}
&\int_2^T \abs{\Sigma_1(s_1,s_2,s_3)E(s_1,s_2,s_3)} dt_3 \\
&\ll \left( \int_2^T \abs{\Sigma_1(s_1,s_2,s_3)}^2 dt_3 \right)^\frac{1}{2} \left( \int_2^T \abs{E(s_1,s_2,s_3)}^2 dt_3 \right)^\frac{1}{2} \\
&\ll T^\frac{1}{2} \cdot 1^\frac{1}{2} \ll T^\frac{1}{2}. \\
\end{align*}

Thus we have
\begin{align*}
\int_2^T \abs{\zeta_{AV,2} (s_1,s_2,s_3) }^2 dt_3 &= \zeta_{AV,2}^{[2]} (s_1,s_2,2\sigma_3)T \\
&+\begin{cases}
O ( T^{2-2\sigma_1-2\sigma_3} \log T ) & (\frac{1}{2} < \sigma_1+\sigma_3 \leq \frac{3}{4}) \\
O ( T^\frac{1}{2} ) & (\frac{3}{4} < \sigma_1+\sigma_3 \leq 1), \\
\end{cases}
\end{align*}
so we complete the proof of Theorem \ref{thm:second thm}.  
\end{proof}

Next, we give a mean square formula for $\sigma_1+\sigma_3>\frac{1}{2}$ and $2 \geq \sigma_1+\sigma_2+\sigma_3 > \frac{3}{2}$ by using Theorem \ref{thm:approximation2}.
\begin{proof}[Proof of Theorem \ref{thm:third thm}]
For $s_j =\sigma_j+it_j \in \mathbb{C} (j = 1,2,3)$ with $\sigma_1 \geq 0, \sigma_3>0, \sigma_1+\sigma_3>\frac{1}{2}, 2 \geq \sigma_1+\sigma_2+\sigma_3 > \frac{3}{2}$. We also assume that $s_1+s_3 \neq 1$ and $s_1+s_2+s_3 \neq 2$. From the above assumptions, we find that $\sigma_2 <\frac{3}{2}$. Therefore by Theorem \ref{thm:approximation2} we have 
\begin{equation}
\label{av-3}
\zeta_{AV,2} (s_1,s_2,s_3) = \sum_{m \leq at_3} \sum_{n<m} \frac{1}{m^{s_1}n^{s_2}(m+n)^{s_3}} +
O ( t_3^{\frac{3}{2}-\sigma_1-\sigma_2-\sigma_3} ).
\end{equation}

Similar to the proof of Theorem \ref{thm:second thm}, denote the main term of (\ref{av-3}) by $\Sigma_1(s_1,s_2,s_3)$ and the error term by $E(s_1,s_2,s_3)$, respectively. Then in the same way as (\ref{w_1Tw_2+w_3}), we have
\[
\int_2^T \abs{\Sigma_1(s_1,s_2,s_3)}^2 dt_3 =W_1 T-W_2+W_3,
\]
\begin{align*}
W_1 
&= \zeta_{AV,2}^{[2]} (s_1,s_2,2\sigma_3)-(U_1+U_3)-(U_2+U_3)+U_3
\end{align*}
and
\begin{align*}
U_1+U_3 &\ll \sum_{\substack{m_1>aT \\ n_1 < m_1}} \frac{1}{m_1^{\sigma_1}n_1^{\sigma_2}(m_1+n_1)^{\sigma_1+2\sigma_3}} \sum_{n_2 <\frac{m_1+n_1}{2}} \frac{1}{n_2^{\sigma_2}}. 
\end{align*}
In the case of Theorem \ref{thm:second thm}, we considered only one case since $\sigma_2>1$. However now we have to consider the following three cases since $\sigma_2 <\frac{3}{2}$. We estimate each cases separately, and obtain
\begin{align*}
& \sum_{\substack{m_1>aT \\ n_1 < m_1}} \frac{1}{m_1^{\sigma_1}n_1^{\sigma_2}(m_1+n_1)^{\sigma_1+2\sigma_3}} \sum_{n_2 <\frac{m_1+n_1}{2}} \frac{1}{n_2^{\sigma_2}} \\
& \ll \sum_{\substack{m_1>aT \\ n_1 < m_1}} \begin{cases}
\frac{1}{m_1^{\sigma_1}n_1^{\sigma_2}(m_1+n_1)^{\sigma_1+2\sigma_3}} & (\frac{3}{2} >\sigma_2 >1) \\
\frac{\log (m_1+n_1)}{m_1^{\sigma_1}n_1^{\sigma_2}(m_1+n_1)^{\sigma_1+2\sigma_3}} & (\sigma_2 =1) \\
\frac{1}{m_1^{\sigma_1}n_1^{\sigma_2}(m_1+n_1)^{\sigma_1+\sigma_2+2\sigma_3-1}} & (\sigma_2 <1) \\
\end{cases} \\
& \ll \begin{cases}
T^{1-2\sigma_1-2\sigma_3} & (\frac{3}{2} >\sigma_2 >1) \\
T^{1-2\sigma_1-2\sigma_3} (\log T)^2 & (\sigma_2 =1) \\
T^{3-2\sigma_1-2\sigma_2-2\sigma_3} & (\sigma_2 <1). \\
\end{cases} 
\end{align*}
Similarly, we have
\[
U_2+U_3 , U_3 = \begin{cases}
O\left(T^{1-2\sigma_1-2\sigma_3} \right) & (\frac{3}{2} >\sigma_2 >1) \\
O\left(T^{1-2\sigma_1-2\sigma_3} (\log T)^2 \right) & (\sigma_2 =1) \\
O\left(T^{3-2\sigma_1-2\sigma_2-2\sigma_3} \right) & (\sigma_2 <1). \\
\end{cases} 
\]

Therefore we obtain
\begin{equation}
\label{W_1 T}
W_1 T = \zeta_{AV,2}^{[2]} (s_1,s_2,2\sigma_3)T+ \begin{cases}
O (T^{2-2\sigma_1-2\sigma_3} ) & (\frac{3}{2} >\sigma_2 >1) \\
O (T^{2-2\sigma_1-2\sigma_3} (\log T)^2 ) & (\sigma_2 =1) \\
O (T^{4-2\sigma_1-2\sigma_2-2\sigma_3} ) & (\sigma_2 <1). \\
\end{cases} 
\end{equation}

Next, by Theorem \ref{thm:second thm}, we have
\begin{align*}
W_2 &\ll \sum_{\substack{m_1 \leq aT \\ n_1 < m_1}} \frac{1}{m_1^{\sigma_1}n_1^{\sigma_2}(m_1+n_1)^{\sigma_1+2\sigma_3-1}} \sum_{n_2 <\frac{m_1+n_1}{2}} \frac{1}{n_2^{\sigma_2}}.
\end{align*}
Considering the same three cases as above, we have
\begin{align}
\label{W_2}
W_2 &\ll \sum_{m_1 \leq aT} \frac{1}{m_1^{\sigma_1}} \sum_{n_1<m_1} \begin{cases}
 \frac{1}{n_1^{\sigma_2} (m_1+n_1)^{\sigma_1+2\sigma_3-1}} & (\frac{3}{2} >\sigma_2 >1) \\
 \frac{\log (m_1+n_1)}{n_1^{\sigma_2} (m_1+n_1)^{\sigma_1+2\sigma_3-1}} & (\sigma_2=1) \\
 \frac{1}{n_1^{\sigma_2} (m_1+n_1)^{\sigma_1+\sigma_2+2\sigma_3-2}} & (\sigma_2<1) \\
\end{cases} \\
&\ll \sum_{m_1 \leq aT}\begin{cases}
 \frac{1}{m_1^{2\sigma_1+2\sigma_3-1}} & (\frac{3}{2} >\sigma_2 >1) \\
 \frac{(\log m_1)^2}{m_1^{2\sigma_1+2\sigma_3-1}} & (\sigma_2=1) \\
 \frac{1}{m_1^{2\sigma_1+2\sigma_2+2\sigma_3-3}} & (\sigma_2<1) \\
\end{cases} \nonumber \\
& \ll \begin{cases}
T^{2-2\sigma_1-2\sigma_3} & (\frac{3}{2} >\sigma_2 >1, \frac{1}{2}<\sigma_1+\sigma_3<1) \\
T^{2-2\sigma_1-2\sigma_3} (\log T)^2 & (\sigma_2=1, \frac{1}{2}<\sigma_1+\sigma_3<1) \\
(\log T)^3 & (\sigma_2=1, \sigma_1+\sigma_3=1) \\
\log T &  (\sigma_2<1, \sigma_1+\sigma_2+\sigma_3 = 2) \\
T^{4-2\sigma_1-2\sigma_2-2\sigma_3} & (\sigma_2<1, \frac{3}{2}<\sigma_1+\sigma_2+\sigma_3 < 2).
\end{cases} \nonumber 
\end{align}

Lastly we estimate $W_3$. We have
\begin{align*}
W_3 &\ll W_{31}+W_{32}
\end{align*}
where
\begin{align*}
W_{31} &\ll \sum_{\substack{m_1 \leq aT \\ n_1<m_1}} \frac{1}{m_1^{\sigma_1}n_1^{\sigma_2}(m_1+n_1)^{\sigma_3-1}} \\
&\times \sum_{r \leq m_1+n_1} \frac{1}{r(m_1+n_1+r)^{\sigma_1+\sigma_3}} \sum_{n_2 <\frac{r+m_1+n_1}{2}} \frac{1}{n_2^{\sigma_2}}, \\
\intertext{and}
W_{32} &\ll \left( \sum_{m \leq aT} \frac{1}{m^{\sigma_1+\sigma_3}} \sum_{n<m} \frac{1}{n^{\sigma_2}} \right)^2.
\end{align*}

First we estimate $W_{32}$. We see that
\begin{align}
\label{W_32}
W_{32} &\ll \left( \sum_{m \leq aT} \frac{1}{m^{\sigma_1+\sigma_3}} \sum_{n<m} \frac{1}{n^{\sigma_2}} \right)^2 \\
&\ll \left( \sum_{m \leq aT} \begin{cases}
\frac{1}{m^{\sigma_1+\sigma_3}} & (\frac{3}{2}>\sigma_2>1) \\
\frac{\log m}{m^{\sigma_1+\sigma_3}} & (\sigma_2=1) \\
\frac{1}{m^{\sigma_1+\sigma_2+\sigma_3-1}} & (\sigma_2<1) \\
\end{cases} 
\right)^2 \nonumber \\
&\ll \left( \begin{cases}
T^{1-\sigma_1-\sigma_3} & (\frac{3}{2}>\sigma_2>1, \frac{1}{2}<\sigma_1+\sigma_3<1) \\
T^{1-\sigma_1-\sigma_3} \log T & (\sigma_2=1, \frac{1}{2}<\sigma_1+\sigma_3<1) \\
(\log T)^2 & (\sigma_2=1, \sigma_1+\sigma_3=1) \\
\log T &  (\sigma_2<1, \sigma_1+\sigma_2+\sigma_3 = 2) \\
T^{2-\sigma_1-\sigma_2-\sigma_3} & (\sigma_2<1, \frac{3}{2}<\sigma_1+\sigma_2+\sigma_3 < 2)
\end{cases} 
\right)^2 \nonumber \\
&\ll \begin{cases}
T^{2-2\sigma_1-2\sigma_3} & (\frac{3}{2}>\sigma_2>1, \frac{1}{2}<\sigma_1+\sigma_3<1) \\
T^{2-2\sigma_1-2\sigma_3} (\log T)^2 & (\sigma_2=1, \frac{1}{2}<\sigma_1+\sigma_3<1) \\
(\log T)^4 & (\sigma_2=1, \sigma_1+\sigma_3=1) \\
(\log T)^2 &  (\sigma_2<1, \sigma_1+\sigma_2+\sigma_3 = 2) \\
T^{4-2\sigma_1-2\sigma_2-2\sigma_3} & (\sigma_2<1, \frac{3}{2}<\sigma_1+\sigma_2+\sigma_3 < 2).
\end{cases} \nonumber 
\end{align}

Next we estimate $W_{31}$. Similar to the above, we see that
\begin{align}
\label{W_31}
W_{31} &\ll \sum_{m_1 \leq aT} \frac{1}{m_1^{\sigma_1}} \sum_{n_1<m_1} \frac{1}{n_1^{\sigma_2}(m_1+n_1)^{\sigma_3-1}} \\
&\times \sum_{r \leq m_1+n_1} \frac{1}{r(m_1+n_1+r)^{\sigma_1+\sigma_3}} \sum_{n_2 <\frac{r+m_1+n_1}{2}} \frac{1}{n_2^{\sigma_2}} \nonumber \\
&\ll \sum_{m_1 \leq aT} \frac{1}{m_1^{\sigma_1}} \sum_{n_1<m_1} \frac{1}{n_1^{\sigma_2}(m_1+n_1)^{\sigma_3-1}} \nonumber \\
&\times \sum_{r \leq m_1+n_1} \begin{cases} 
\frac{1}{r(m_1+n_1+r)^{\sigma_1+\sigma_3}} & (\frac{3}{2}>\sigma_2>1) \\
\frac{\log (m_1+n_1+r)}{r(m_1+n_1+r)^{\sigma_1+\sigma_3}} & (\sigma_2=1) \\
\frac{1}{r(m_1+n_1+r)^{\sigma_1+\sigma_2+\sigma_3-1}} & (\sigma_2<1) \\
\end{cases} \nonumber \\
&\ll \sum_{m_1 \leq aT} \begin{cases}
\frac{\log m_1}{m_1^{2\sigma_1+2\sigma_3-1}} & (\frac{3}{2}>\sigma_2>1) \\
\frac{(\log m_1)^3}{m_1^{2\sigma_1+2\sigma_3-1}} & (\sigma_2=1) \\
\frac{\log m_1}{m_1^{2\sigma_1+2\sigma_2+2\sigma_3-3}} & (\sigma_2<1) \\
\end{cases} \nonumber \\
&\ll \begin{cases}
T^{2-2\sigma_1-2\sigma_3} \log T& (\frac{3}{2}>\sigma_2>1, \frac{1}{2}<\sigma_1+\sigma_3<1) \\
T^{2-2\sigma_1-2\sigma_3} (\log T)^3 & (\sigma_2=1, \frac{1}{2}<\sigma_1+\sigma_3<1) \\
(\log T)^4 & (\sigma_2=1, \sigma_1+\sigma_3=1) \\
(\log T)^2 &  (\sigma_2<1, \sigma_1+\sigma_2+\sigma_3 = 2) \\
T^{4-2\sigma_1-2\sigma_2-2\sigma_3} \log T & (\sigma_2<1, \frac{3}{2}<\sigma_1+\sigma_2+\sigma_3 < 2).
\end{cases} \nonumber
\end{align}

Therefore by (\ref{W_1 T}), (\ref{W_2}), (\ref{W_32}) and (\ref{W_31}), we have 
\begin{align*}
&\int_2^T \abs{\Sigma_1 (s_1,s_2,s_3)}^2 dt_3 \\ &=\zeta_{AV,2}^{[2]} (s_1,s_2,2\sigma_3)T \\
&+ \begin{cases}
O \left( T^{2-2\sigma_1-2\sigma_3} \log T \right) & (\frac{3}{2}>\sigma_2>1, \frac{1}{2}<\sigma_1+\sigma_3<1) \\
O \left( T^{2-2\sigma_1-2\sigma_3} (\log T)^3 \right) & (\sigma_2=1, \frac{1}{2}<\sigma_1+\sigma_3<1) \\
O \left( (\log T)^4 \right) & (\sigma_2=1, \sigma_1+\sigma_3=1) \\
O \left( (\log T)^2 \right) &  (\sigma_2<1, \sigma_1+\sigma_2+\sigma_3 = 2) \\
O \left( T^{4-2\sigma_1-2\sigma_2-2\sigma_3} \log T \right) & (\sigma_2<1, \frac{3}{2}<\sigma_1+\sigma_2+\sigma_3 < 2). \\
\end{cases} 
\end{align*}
Then
\begin{align*}
&\int_2^T \abs{\zeta_{AV,2} (s_1,s_2,s_3) }^2 dt_3 \\
&= \int_2^T \abs{\Sigma_1 (s_1,s_2,s_3)}^2 dt_3 + O \left( \int_2^T \abs{\Sigma_1(s_1,s_2,s_3)E(s_1,s_2,s_3)} dt_3 \right) \\
&\quad+O \left( \int_2^T \abs{E(s_1,s_2,s_3)}^2 dt_3 \right).  
\end{align*}
Since $\frac{3}{2} < \sigma_1+\sigma_2+\sigma_3 \leq 2$, in this case we have
\begin{align*}
\int_2^T \abs{E(s_1,s_2,s_3)}^2 dt_3 &= O \left( \int_2^T t_3^{3-2\sigma_1-2\sigma_2-2\sigma_3} dt_3 \right) \\
& \ll \begin{cases}
T^{4-2\sigma_1-2\sigma_2-2\sigma_3} & (\frac{3}{2} < \sigma_1+\sigma_2+\sigma_3<2) \\
\log T & (\sigma_1+\sigma_2+\sigma_3=2). \\
\end{cases}
\end{align*}
Thus we see that
\begin{align*}
&\int_2^T \abs{\Sigma_1(s_1,s_2,s_3)E(s_1,s_2,s_3)} dt_3 \\
&\ll \left( \int_2^T \abs{\Sigma_1(s_1,s_2,s_3)}^2 dt_3 \right)^\frac{1}{2} \left( \int_2^T \abs{E(s_1,s_2,s_3)}^2 dt_3 \right)^\frac{1}{2} \\
&\ll \begin{cases} 
T^{\frac{5}{2}-\sigma_1-\sigma_2-\sigma_3} & (\frac{3}{2} < \sigma_1+\sigma_2+\sigma_3<2) \\
(T \log T)^\frac{1}{2} & (\sigma_1+\sigma_2+\sigma_3=2). \\
\end{cases} 
\end{align*}
Comparing the order of the error terms, we obtain
\begin{align*}
&\int_2^T \abs{\zeta_{AV,2} (s_1,s_2,s_3) }^2 dt_3 \\
&=\zeta_{AV,2}^{[2]} (s_1,s_2,2\sigma_3)T \\
&+ \begin{cases}
O ( T^{2-2\sigma_1-2\sigma_3} \log T ) & (\sigma_2 \geq \frac{1}{2}+\sigma_1+\sigma_3) \\
O ( T^{\frac{5}{2}-\sigma_1-\sigma_2-\sigma_3} ) & (\sigma_2 < \frac{1}{2}+\sigma_1+\sigma_3, \frac{3}{2} < \sigma_1+\sigma_2+\sigma_3<2) \\
O ( (T \log T)^\frac{1}{2} ) & (\sigma_2 < \frac{1}{2}+\sigma_1+\sigma_3, \sigma_1+\sigma_2+\sigma_3=2). \\
\end{cases} 
\end{align*}
\end{proof} 

\section{Application}\label{sec4}
By using Theorem \ref{thm:approximation2} and (\ref{functional relation}), we can easily obtain an approximation formula for the Mordell-Tornheim double zeta-function.

\begin{cor}
\label{cor:mt approximation}
For $s_j=\sigma_j+it_j \in \mathbb{C}$ ($j=1,2,3$) with $\sigma_1 \geq 0, t_1 \geq 0$ and $\sigma_2 \geq 0, t_2 \geq 0$.  Assume that $\sigma_3> \max \{ 0, \frac{1}{2}-\sigma_1, \frac{1}{2}-\sigma_2, \frac{3}{2}-\sigma_1-\sigma_2 \}, t_3 \geq2, s_1+s_3 \neq 1, s_2+s_3 \neq 1$ and $s_1+s_2+s_3 \neq 2$. Then we have
\begin{align}
\label{formula:MT appoximation}
\zeta_{MT,2} (s_1,s_2,s_3) &= \sum_{m \leq bt_3} \sum_{n \leq bt_3} \frac{1}{m^{s_1}n^{s_2}(m+n)^{s_3}} \nonumber \\
&\quad+\begin{cases}
O ( t_3^{-\min \{ \sigma_1+\sigma_3, \sigma_2+\sigma_3 \} } ) & (\max \{ \sigma_1, \sigma_2 \} >\frac{3}{2}) \\
O ( t_3^{-\min \{ \sigma_1+\sigma_3, \sigma_2+\sigma_3 \} } \log t_3 ) & (\max \{ \sigma_1, \sigma_2 \}=\frac{3}{2}) \\
O ( t_3^{\frac{3}{2}-\sigma_1-\sigma_2-\sigma_3} ) & (\sigma_1,\sigma_2<\frac{3}{2}) \\
\end{cases}
\end{align}
where $b=\max \{1, t_1, t_2 \}$ and implicit constants depend on $s_1,s_2$ and $\sigma_3$.  
\end{cor}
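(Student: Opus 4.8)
The plan is to apply Theorem \ref{thm:approximation2} to each of $\zeta_{AV,2}(s_1,s_2,s_3)$ and $\zeta_{AV,2}(s_2,s_1,s_3)$ and substitute the results into the functional relation (\ref{functional relation}). One preliminary adjustment is needed: Theorem \ref{thm:approximation2} truncates the first series at $\max\{1,|t_1|\}t_3$ and the second at $\max\{1,|t_2|\}t_3$, whereas the corollary uses the common cutoff $bt_3$ with $b=\max\{1,|t_1|,|t_2|\}$. Since $b$ is a fixed constant (depending only on $s_1,s_2$) with $b\geq\max\{1,|t_1|\}$ and $b\geq\max\{1,|t_2|\}$, the inequality $\frac{3}{2}bt_3-|t_1|\geq t_3$, and its analogue with $|t_2|$, follows at once from the computation in the proof of Theorem \ref{thm:approximation2}; hence Theorem \ref{thm:approximation1} may be invoked with $x=y=bt_3$, and since the estimates in the proof of Theorem \ref{thm:approximation2} involve the truncation only through $bt_3\asymp t_3$, the resulting error bounds in $t_3$ are unchanged. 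The hypotheses of the corollary are exactly what the two applications jointly require (their $\sigma_3$-conditions combining to $\sigma_3>\max\{0,\frac{1}{2}-\sigma_1,\frac{1}{2}-\sigma_2,\frac{3}{2}-\sigma_1-\sigma_2\}$, and the excluded hyperplanes $s_1+s_3=1$, $s_2+s_3=1$, $s_1+s_2+s_3=2$ being the poles of the two $\zeta_{AV,2}$'s), and we obtain
\begin{align*}
\zeta_{AV,2}(s_1,s_2,s_3)&=\sum_{m\leq bt_3}\sum_{n<m}\frac{1}{m^{s_1}n^{s_2}(m+n)^{s_3}}+\mathcal E_1,\\
\zeta_{AV,2}(s_2,s_1,s_3)&=\sum_{m\leq bt_3}\sum_{n<m}\frac{1}{m^{s_2}n^{s_1}(m+n)^{s_3}}+\mathcal E_2,
\end{align*}
where $\mathcal E_1\ll t_3^{-\sigma_1-\sigma_3}$, $t_3^{-\sigma_1-\sigma_3}\log t_3$, or $t_3^{3/2-\sigma_1-\sigma_2-\sigma_3}$ according as $\sigma_2>\frac{3}{2}$, $\sigma_2=\frac{3}{2}$, or $\sigma_2<\frac{3}{2}$, and $\mathcal E_2$ is the same with the roles of $\sigma_1$ and $\sigma_2$ interchanged.

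Next I would rename the summation variables $m\leftrightarrow n$ in the second sum, so that the two main terms together range over all $(m,n)$ with $m,n\leq bt_3$ and $m\neq n$; restoring the diagonal $m=n$ gives
\begin{align*}
&\sum_{m\leq bt_3}\sum_{n<m}\frac{1}{m^{s_1}n^{s_2}(m+n)^{s_3}}+\sum_{n\leq bt_3}\sum_{m<n}\frac{1}{m^{s_1}n^{s_2}(m+n)^{s_3}}\\
&\qquad=\sum_{m\leq bt_3}\sum_{n\leq bt_3}\frac{1}{m^{s_1}n^{s_2}(m+n)^{s_3}}-2^{-s_3}\sum_{m\leq bt_3}\frac{1}{m^{s_1+s_2+s_3}}.
\end{align*}
Since $\sigma_1+\sigma_2+\sigma_3>\frac{3}{2}>1$, the last sum equals $2^{-s_3}\zeta(s_1+s_2+s_3)+O(t_3^{1-\sigma_1-\sigma_2-\sigma_3})$; substituting everything into (\ref{functional relation}), the term $2^{-s_3}\zeta(s_1+s_2+s_3)$ appearing there cancels the one just produced, and we are left with
\[
\zeta_{MT,2}(s_1,s_2,s_3)=\sum_{m\leq bt_3}\sum_{n\leq bt_3}\frac{1}{m^{s_1}n^{s_2}(m+n)^{s_3}}+\mathcal E_1+\mathcal E_2+O\bigl(t_3^{1-\sigma_1-\sigma_2-\sigma_3}\bigr).
\]

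It remains to compare the three error contributions $\mathcal E_1$, $\mathcal E_2$, $O(t_3^{1-\sigma_1-\sigma_2-\sigma_3})$. If $\sigma_1,\sigma_2<\frac{3}{2}$, then $\mathcal E_1,\mathcal E_2\ll t_3^{3/2-\sigma_1-\sigma_2-\sigma_3}$, which dominates the diagonal tail $t_3^{1-\sigma_1-\sigma_2-\sigma_3}$ (smaller by a factor $t_3^{1/2}$), and we get the third case. If $\max\{\sigma_1,\sigma_2\}\geq\frac{3}{2}$, then each of the three terms is $O(t_3^{-\sigma_3-\min\{\sigma_1,\sigma_2\}})$, with an extra factor $\log t_3$ occurring precisely when $\max\{\sigma_1,\sigma_2\}=\frac{3}{2}$; indeed, $\max\{\sigma_1,\sigma_2\}\geq\frac{3}{2}>1$ forces $1-\sigma_1-\sigma_2\leq-\min\{\sigma_1,\sigma_2\}$ and $\frac{3}{2}-\sigma_1-\sigma_2\leq-\min\{\sigma_1,\sigma_2\}$, so the remaining error term and the diagonal tail are absorbed. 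Summing and using $-\sigma_3-\min\{\sigma_1,\sigma_2\}=-\min\{\sigma_1+\sigma_3,\sigma_2+\sigma_3\}$ yields the first two cases. The one genuinely delicate point is the preliminary step: to secure the uniform truncation $bt_3$ one has to revisit the proof of Theorem \ref{thm:approximation2} rather than merely quote its statement, and to check that the hypotheses of the two applications intersect exactly in the region defining the corollary; after that, the manipulation of the double sum and the comparison of the error terms are routine.
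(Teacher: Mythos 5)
Your proposal is correct and follows essentially the same route as the paper: both apply Theorem \ref{thm:approximation2} (with the truncation parameter $a$ replaced by $b$, justified by rechecking the inequality $t_3\leq 2\pi x/C-|t_j|$ inside that proof) to $\zeta_{AV,2}(s_1,s_2,s_3)$ and $\zeta_{AV,2}(s_2,s_1,s_3)$, merge the two triangular main terms with the truncated Dirichlet series for $2^{-s_3}\zeta(s_1+s_2+s_3)$ (the diagonal) via the relation (\ref{functional relation}), and absorb the tail $O(t_3^{1-\sigma_1-\sigma_2-\sigma_3})$ into the combined error. Your write-up is in fact somewhat more explicit than the paper's about the diagonal bookkeeping and the case-by-case comparison of the error terms.
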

\begin{proof}
In the proof of Theorem \ref{thm:approximation2}, let $x=y=bt_3$, $\kappa=\frac{4\pi }{3}$ and we replace $a=\max \{1, t_1\}$ with $b=\max \{1, t_1, t_2 \}$, then it holds that 
\[
t_3 \leq \frac{2 \pi x}{\kappa} - t_1, \quad t_3 \leq \frac{2 \pi x}{\kappa} - t_2 
\]
for $t_3 \geq 2$. Thus by Theorem \ref{thm:approximation2}, the following identities hold in the region indicated by the assumptions:
\begin{align*}
\zeta_{AV,2} (s_1,s_2,s_3) &= \sum_{m \leq bt_3} \sum_{n<m} \frac{1}{m^{s_1}n^{s_2}(m+n)^{s_3}} + (\text{error terms $1$}), \\
\zeta_{AV,2} (s_2,s_1,s_3) &= \sum_{m \leq bt_3} \sum_{n<m} \frac{1}{m^{s_2}n^{s_1}(m+n)^{s_3}} + (\text{error terms $2$}), \\
\frac{1}{2^{s_3}} \zeta (s_1+s_2+s_3) &= \sum_{m \leq bt_3} \frac{1}{m^{s_1+s_2}(2m)^{s_3}} +\sum_{m > bt_3} \frac{1}{m^{s_1+s_2}(2m)^{s_3}}.
\end{align*}
where (\text{error terms $1$}) are the all terms except for the first term in the right hand side in (\ref{AV_2}), and (\text{error terms $2$}) are that replaced $s_1$ and $s_2$ in (\text{error terms $1$}). 

In this region, we have 
\begin{align*}
&(\text{error terms $1$}) + (\text{error terms $2$}) \\
&= \begin{cases}
O ( t_3^{-\min \{ \sigma_1+\sigma_3, \sigma_2+\sigma_3 \} } ) & (\max \{ \sigma_1, \sigma_2 \} >\frac{3}{2}) \\
O ( t_3^{-\min \{ \sigma_1+\sigma_3, \sigma_2+\sigma_3 \} } \log t_3 ) & (\max \{ \sigma_1, \sigma_2 \}=\frac{3}{2}) \\
O ( t_3^{\frac{3}{2}-\sigma_1-\sigma_2-\sigma_3} ) & (\sigma_1,\sigma_2<\frac{3}{2}) \\
\end{cases}
\end{align*}
by Theorem \ref{thm:approximation2}.
\end{proof}

Finally, by using Corollary \ref{cor:mt approximation}, we prove Theorem \ref{thm:forth thm}.  
\begin{proof}[Proof of Theorem \ref{thm:forth thm}]
Let  $s_j=\sigma_j+it_j $ ($j=1,2,3$) be in the domain $\mathcal{D}^\prime$. Since $\frac{1}{2} < \sigma_1+\sigma_3 \leq1, \frac{1}{2} < \sigma_2+\sigma_3 \leq1$ and $\sigma_3> 0$, we find that $\sigma_1, \sigma_2<1$. Then by Corollary \ref{cor:mt approximation}, we have
\begin{equation}
\label{mt approximation}
\zeta_{MT,2} (s_1,s_2,s_3) = \sum_{m \leq bt_3} \sum_{n \leq bt_3} \frac{1}{m^{s_1}n^{s_2}(m+n)^{s_3}} +
O ( t_3^{\frac{3}{2}-\sigma_1-\sigma_2-\sigma_3} ).
\end{equation}
Denote the main term of (\ref{mt approximation}) by $\Sigma_2(s_1,s_2,s_3)$ and the error term by $E(s_1,s_2,s_3)$, respectively. Then for $M=\max \{ \frac{m_1}{b},\frac{n_1}{b},\frac{m_2}{b}, \frac{n_2}{b},2\}$,we have
\begin{align*}
&\int_2^T \abs{\Sigma_2 (s_1,s_2,s_3)}^2 dt_3 \\
&= \int_2^T \sum_{m_1,n_1 \leq bt_3} \frac{1}{m_1^{s_1} n_1^{s_2} (m_1+n_1)^{s_3}} \sum_{m_2,n_2 \leq bt_3} \frac{1}{m_2^{\overline{s_1}} n_2^{\overline{s_2}} (m_2+n_2)^{\overline{s_3}}}dt_3 \\
&=  \sum_{\substack{m_1,n_1 \leq bT \\ m_2,n_2 \leq bT}} \frac{1}{m_1^{s_1} m_2^{\overline{s_1}} n_1^{s_2} n_2^{\overline{s_2}} (m_1+n_1)^{\sigma_3} (m_2+n_2)^{\sigma_3}} \int_{M}^T \left( \frac{m_2+n_2}{m_1+n_1} \right)^{it_3} dt_3 \\
&= \sum_{\substack{m_1,n_1,m_2,n_2 \leq bT \\ m_1+n_1=m_2+n_2}} \frac{1}{m_1^{s_1} m_2^{\overline{s_1}}n_1^{s_2}  n_2^{\overline{s_2}} (m_1+n_1)^{2\sigma_3}} (T-M) \\
&+\sum_{\substack{m_1,n_1,m_2,n_2 \leq bT \\ m_1+n_1 \neq m_2+n_2}} \frac{1}{m_1^{s_1} m_2^{\overline{s_1}}n_1^{s_2}  n_2^{\overline{s_2}} (m_1+n_1)^{\sigma_3}(m_1+n_1)^{\sigma_3}} \\
&\times \frac{e^{iT\log \left( \frac{m_2+n_2}{m_1+n_1} \right)}-e^{iM\log \left( \frac{m_2+n_2}{m_1+n_1} \right)}}{i\log \left( \frac{m_2+n_2}{m_1+n_1} \right)} \\
&= S_1 T -S_2+S_3,
\end{align*}
say. Firstly we estimate $S_1$.  
\begin{align}
\label{S_1}
S_1 &= \sum_{\substack{m_1,n_1,m_2,n_2 \leq bT \\ m_1+n_1=m_2+n_2}} \frac{1}{m_1^{s_1} m_2^{\overline{s_1}}n_1^{s_2}  n_2^{\overline{s_2}} (m_1+n_1)^{2\sigma_3}} \\
&= \sum_{\substack{m_1,n_1,m_2,n_2 \geq 1 \\ m_1+n_1=m_2+n_2}} \frac{1}{m_1^{s_1} m_2^{\overline{s_1}}n_1^{s_2}  n_2^{\overline{s_2}} (m_1+n_1)^{2\sigma_3}} \nonumber \\
& - \left( \left( \sum_{\substack{m_1> bT \\ n_1,m_2,n_2 \leq bT \\ m_1+n_1=m_2+n_2}}+\sum_{\substack{n_1> bT \\ m_1,m_2,n_2 \leq bT \\ m_1+n_1=m_2+n_2}}+\sum_{\substack{m_2> bT \\ m_1,n_1,n_2 \leq bT \\ m_1+n_1=m_2+n_2}}+\sum_{\substack{n_2> bT \\ m_1,n_1,m_2 \leq bT \\ m_1+n_1=m_2+n_2}} \right) \right. \nonumber \\
& + \left( \sum_{\substack{m_1,m_2> bT \\ n_1,n_2 \leq bT \\ m_1+n_1=m_2+n_2}}+\sum_{\substack{n_1,n_2> bT \\ m_1,m_2 \leq bT \\ m_1+n_1=m_2+n_2}}+\sum_{\substack{n_1,m_2> bT \\ m_1,n_2 \leq bT \\ m_1+n_1=m_2+n_2}}+\sum_{\substack{m_1,n_2> bT \\ n_1,m_2 \leq bT \\ m_1+n_1=m_2+n_2}} \right) \nonumber \\
& + \left. \left( \sum_{\substack{m_1,n_1,m_2> bT \\ n_2 \leq bT \\ m_1+n_1=m_2+n_2}}+\sum_{\substack{m_1,n_1,n_2> bT \\ m_2 \leq bT \\ m_1+n_1=m_2+n_2}}+ \sum_{\substack{n_1,m_2,n_2> bT \\ m_1 \leq bT \\ m_1+n_1=m_2+n_2}}+\sum_{\substack{m_1,m_2,n_2> bT \\ n_1 \leq bT \\ m_1+n_1=m_2+n_2}} \right) \right) \nonumber \\
&\times \frac{1}{m_1^{s_1} m_2^{\overline{s_1}}n_1^{s_2}  n_2^{\overline{s_2}} (m_1+n_1)^{2\sigma_3}} \nonumber \\
&= \sum_{\substack{m_1,n_1,m_2,n_2 \geq 1 \\ m_1+n_1=m_2+n_2}} \frac{1}{m_1^{s_1} m_2^{\overline{s_1}}n_1^{s_2}  n_2^{\overline{s_2}} (m_1+n_1)^{2\sigma_3}} \nonumber \\
&\quad -\left( \left( U_{11}+U_{12}+U_{13}+U_{14} \right) \right. \nonumber \\
&\qquad+ \left( U_{21}+U_{22}+U_{23}+U_{24} \right) \nonumber \\
&\qquad+ \left. \left( U_{31}+U_{32}+U_{33}+U_{34} \right) \right), \nonumber 
\end{align}
say. The above deformation is given by Miyagawa \cite{Mi} for the purpose of calculating mean values of the Barnes double zeta-function. We have
\[
\sum_{\substack{m_1,n_1,m_2,n_2 \geq 1 \\ m_1+n_1=m_2+n_2}} \frac{1}{m_1^{s_1} m_2^{\overline{s_1}}n_1^{s_2}  n_2^{\overline{s_2}} (m_1+n_1)^{2\sigma_3}} = \zeta_{MT,2}^{[2]} (s_1,s_2,2\sigma_3),
\]
and
\begin{align*}
& \left( U_{11}+U_{12}+U_{13}+U_{14} \right)  \\
&\quad+ \left( U_{21}+U_{22}+U_{23}+U_{24} \right) \\
&\quad+ \left( U_{31}+U_{32}+U_{33}+U_{34} \right) \\ 
&= \left( U_{11}+U_{12}+U_{13}+U_{14} \right) + \left( U_{21}+U_{22}+U_{23}+U_{24} \right) \\
&\quad+ \left( U_{21}+U_{22}+U_{23}+U_{24} \right) + \left( U_{31}+U_{32}+U_{33}+U_{34} \right) \\ 
&\quad- \left( U_{21}+U_{22}+U_{23}+U_{24} \right).
\end{align*}
Since $m_1+n_1=m_2+n_2$, we see that $n_2=m_1+n_1-m_2$ and $m_2 \leq m_1+n_1$, and since $\sigma_1,\sigma_2<1$, we have 
\begin{align*}
U_{11}+U_{21} &= \left( \sum_{\substack{m_1> bT \\ n_1,m_2,n_2 \leq bT \\ m_1+n_1=m_2+n_2}}+\sum_{\substack{m_1,m_2> bT \\ n_1,n_2 \leq bT \\ m_1+n_1=m_2+n_2}} \right) \frac{1}{m_1^{s_1} m_2^{\overline{s_1}}n_1^{s_2}  n_2^{\overline{s_2}} (m_1+n_1)^{2\sigma_3}} \\
&\ll \sum_{\substack{m_1> bT,m_2 \leq m_1+n_1 \\ n_1,n_2 \leq bT \\ m_1+n_1=m_2+n_2}} \frac{1}{m_1^{\sigma_1} m_2^{\sigma_1} n_1^{\sigma_2}  n_2^{\sigma_2} (m_1+n_1)^{2\sigma_3}} \\
&\ll \sum_{\substack{m_1> bT \\ n_1 \leq bT}} \frac{1}{m_1^{\sigma_1} n_1^{\sigma_2}(m_1+n_1)^{2\sigma_3}} \left( \sum_{m_2 <\frac{m_1+n_1}{2}}+\sum_{\frac{m_1+n_1}{2} \leq m_2 <m_1+n_1} \right) \\
&\times \frac{1}{m_2^{\sigma_1}(m_1+n_1-m_2)^{\sigma_2} }\\
&\ll \sum_{\substack{m_1> bT \\ n_1 \leq bT}} \frac{1}{m_1^{\sigma_1} n_1^{\sigma_2}(m_1+n_1)^{\sigma_2+2\sigma_3}}  \sum_{m_2 <\frac{m_1+n_1}{2}} \frac{1}{m_2^{\sigma_1}} \\
&+\sum_{\substack{m_1> bT \\ n_1 \leq bT}} \frac{1}{m_1^{\sigma_1} n_1^{\sigma_2}(m_1+n_1)^{\sigma_1+2\sigma_3}} \sum_{\frac{m_1+n_1}{2} \leq m_2 <m_1+n_1} \frac{1}{(m_1+n_1-m_2)^{\sigma_2}} \\
&\ll \sum_{\substack{m_1> bT \\ n_1 \leq bT}} \frac{1}{m_1^{\sigma_1} n_1^{\sigma_2}(m_1+n_1)^{\sigma_1+\sigma_2+2\sigma_3-1}} \\
&\ll \sum_{m_1> bT} \frac{1}{m_1^{2\sigma_1+\sigma_2+2\sigma_3-1}} \sum_{n_1 \leq bT} \frac{1}{n_1^{\sigma_2}} \\
&\ll \sum_{m_1> bT} \frac{1}{m_1^{2\sigma_1+2\sigma_2+2\sigma_3-2}} \\
&\ll T^{3-2\sigma_1-2\sigma_2-2\sigma_3}.
\end{align*}
By symmetry, we have $U_{12}+U_{22}, U_{13}+U_{23}, U_{14}+U_{24} = O(T^{3-2\sigma_1-2\sigma_2-2\sigma_3})$.
Further by $m_1=m_2+n_2-n_1, n_1 \leq m_1+n_1= m_2+n_2$ and $\sigma_1,\sigma_2<1$, we have
\begin{align*}
U_{21}+U_{31} &= \left( \sum_{\substack{m_1,m_2> bT \\ n_1,n_2 \leq bT \\ m_1+n_1=m_2+n_2}} + \sum_{\substack{m_1,n_1,m_2> bT \\ n_2 \leq bT \\ m_1+n_1=m_2+n_2}}\right) \frac{1}{m_1^{s_1} m_2^{\overline{s_1}}n_1^{s_2}  n_2^{\overline{s_2}} (m_1+n_1)^{2\sigma_3}} \\
&\ll \sum_{\substack{m_1,m_2> bT \\ n_1 \leq m_2+n_2, n_2 \leq bT \\ m_1+n_1=m_2+n_2}} \frac{1}{m_1^{\sigma_1} m_2^{\sigma_1} n_1^{\sigma_2}  n_2^{\sigma_2} (m_1+n_1)^{2\sigma_3}} \\
&\ll T^{3-2\sigma_1-2\sigma_2-2\sigma_3}.
\end{align*}
Again, by symmetry, we have $U_{22}+U_{32}, U_{23}+U_{33}, U_{24}+U_{34} = O( T^{3-2\sigma_1-2\sigma_2-2\sigma_3})$, and $U_{22}, U_{23}, U_{24} = O(T^{3-2\sigma_1-2\sigma_2-2\sigma_3})$.

Therefore we obtain
\begin{equation}
\label{S_1 T}
S_1 T=  \zeta_{MT,2}^{[2]} (s_1,s_2,2\sigma_3) T + O \left( T^{4-2\sigma_1-2\sigma_2-2\sigma_3} \right).
\end{equation}

Next we estimate $S_2$. Since $M \leq m_1+n_1 (=m_2+n_2)$, we can estimate $S_2$ by the same way as $U_{11}+U_{21}$ with $2\sigma_3$ replaced by $2\sigma_3-1$:
\begin{align}
\label{S_2}
S_2 &\ll \sum_{\substack{m_1,n_1,m_2,n_2 \leq bT \\ m_1+n_1=m_2+n_2}} \frac{1}{m_1^{\sigma_1}m_2^{\sigma_1}n_1^{\sigma_2}n_2^{\sigma_2}(m_1+n_1)^{2\sigma_3-1}} \\
&\ll T^{4-2\sigma_1-2\sigma_2-2\sigma_3}. \nonumber
\end{align}

Finally we estimate $S_3$. We have
\begin{align*}
S_3 &\ll \sum_{\substack{m_1,n_1,m_2,n_2 \leq bT \\ m_1+n_1 \neq m_2+n_2}} \frac{1}{m_1^{\sigma_1}m_2^{\sigma_1}n_1^{\sigma_2}n_2^{\sigma_2}(m_1+n_1)^{\sigma_3}(m_2+n_2)^{\sigma_3}} \frac{1}{\log \left( \frac{m_2+n_2}{m_1+n_1} \right) }\nonumber \\
&\ll \sum_{\substack{m_1,n_1,m_2,n_2 \leq bT \\ m_1+n_1 < m_2+n_2 \leq 2(m_1+n_1)}} \frac{1}{m_1^{\sigma_1}m_2^{\sigma_1}n_1^{\sigma_2}n_2^{\sigma_2}(m_1+n_1)^{\sigma_3}(m_2+n_2)^{\sigma_3}} \frac{1}{\log \left( \frac{m_2+n_2}{m_1+n_1} \right) }\nonumber \\
&+ \sum_{\substack{m_1,n_1,m_2,n_2 \leq bT \\ 2(m_1+n_1) < m_2+n_2}} \frac{1}{m_1^{\sigma_1}m_2^{\sigma_1}n_1^{\sigma_2}n_2^{\sigma_2}(m_1+n_1)^{\sigma_3}(m_2+n_2)^{\sigma_3}} \frac{1}{\log \left( \frac{m_2+n_2}{m_1+n_1} \right) }\nonumber \\
&\ll S_{31}+S_{32},
\end{align*}
say. As for $S_{32}$, we have
\begin{align*}
S_{32} &\ll \sum_{m_1,n_1,m_2,n_2 \leq bT} \frac{1}{m_1^{\sigma_1}m_2^{\sigma_1}n_1^{\sigma_2}n_2^{\sigma_2}(m_1+n_1)^{\sigma_3}(m_2+n_2)^{\sigma_3}} \\
&\ll \left( \sum_{m,n \leq bT} \frac{1}{m^{\sigma_1}n^{\sigma_2}(m+n)^{\sigma_3}} \right)^2 \\
&\ll \left( \sum_{m \leq bT} \frac{1}{m^{\sigma_1+\sigma_2+\sigma_3-1}} + \sum_{l \leq bT} \frac{1}{l^{\sigma_1+\sigma_2+\sigma_3}} \right)^2 \\
&\ll \left( T^{2-\sigma_1-\sigma_2-\sigma_3} \right)^2 \\
&\ll T^{4-2\sigma_1-2\sigma_2-2\sigma_3}.
\end{align*}

As for $S_{31}$, setting $r=m_2+n_2-m_1-n_1$, since $n_2 \leq m_2+n_2 =r+m_1+n_1$, we have
\begin{align*}
S_{31} &\ll \sum_{\substack{m_1,n_1,m_2,n_2 \leq bT \\ m_1+n_1 < m_2+n_2 \leq 2(m_1+n_1)}} \frac{1}{m_1^{\sigma_1}m_2^{\sigma_1}n_1^{\sigma_2}n_2^{\sigma_2}(m_1+n_1)^{\sigma_3}(m_2+n_2)^{\sigma_3}} \\
&\times \frac{1}{\log \left( \frac{m_2+n_2}{m_1+n_1} \right)} \nonumber \\
&\ll \sum_{m_1,n_1 \leq bT} \frac{1}{m_1^{\sigma_1}n_1^{\sigma_2}(m_1+n_1)^{\sigma_3}} \sum_{r \leq m_1+n_1} \frac{1}{(m_1+n_1+r)^{\sigma_3}} \frac{1}{\log \left( 1 + \frac{r}{m_1+n_1} \right) } \\
&\times \sum_{n_2 \leq r+m_1+n_1} \frac{1}{(r+m_1+n_1-n_2)^{\sigma_1}n_2^{\sigma_2}} \nonumber \\
&\ll \sum_{m_1,n_1 \leq bT} \frac{1}{m_1^{\sigma_1}n_1^{\sigma_2}(m_1+n_1)^{\sigma_3-1}} \sum_{r \leq m_1+n_1} \frac{1}{r(m_1+n_1+r)^{\sigma_3}} \\
&\times \sum_{n_2 \leq r+m_1+n_1} \frac{1}{(r+m_1+n_1-n_2)^{\sigma_1}n_2^{\sigma_2}} \nonumber \\
&\ll \sum_{m_1,n_1 \leq bT} \frac{1}{m_1^{\sigma_1}n_1^{\sigma_2}(m_1+n_1)^{\sigma_3-1}} \sum_{r \leq m_1+n_1} \frac{1}{r(m_1+n_1+r)^{\sigma_3}} \\
&\times \left( \sum_{n_2 < \frac{r+m_1+n_1}{2}} + \sum_{\frac{r+m_1+n_1}{2} \leq n_2 < r+m_1+n_1} \right) \frac{1}{(r+m_1+n_1-n_2)^{\sigma_1}n_2^{\sigma_2}} \\
&\ll \sum_{m_1,n_1 \leq bT} \frac{1}{m_1^{\sigma_1}n_1^{\sigma_2}(m_1+n_1)^{\sigma_3-1}} \sum_{r \leq m_1+n_1} \frac{1}{r(m_1+n_1+r)^{\sigma_1+\sigma_3}} \\
&\times \sum_{n_2 < \frac{r+m_1+n_1}{2}} \frac{1}{n_2^{\sigma_2}} \\
&+ \sum_{m_1,n_1 \leq bT} \frac{1}{m_1^{\sigma_1}n_1^{\sigma_2}(m_1+n_1)^{\sigma_3-1}} \sum_{r \leq m_1+n_1} \frac{1}{r(m_1+n_1+r)^{\sigma_2+\sigma_3}} \\
&\times \sum_{\frac{r+m_1+n_1}{2} \leq n_2 < r+m_1+n_1} \frac{1}{(r+m_1+n_1-n_2)^{\sigma_1}} \\
&\ll \sum_{m_1,n_1 \leq bT} \frac{1}{m_1^{\sigma_1}n_1^{\sigma_2}(m_1+n_1)^{\sigma_3-1}} \sum_{r \leq m_1+n_1} \frac{1}{r(m_1+n_1+r)^{\sigma_1+\sigma_2+\sigma_3-1}} \\
&\ll \sum_{m_1,n_1 \leq bT} \frac{1}{m_1^{\sigma_1}n_1^{\sigma_2}(m_1+n_1)^{\sigma_1+\sigma_2+2\sigma_3-2}} \sum_{r \leq m_1+n_1} \frac{1}{r} \\
&\ll \sum_{m_1,n_1 \leq bT} \frac{\log (m_1+n_1)}{m_1^{\sigma_1}n_1^{\sigma_2}(m_1+n_1)^{\sigma_1+\sigma_2+2\sigma_3-2}} \\
&\ll \sum_{m_1 \leq bT} \frac{\log m_1}{m_1^{2\sigma_1+2\sigma_2+2\sigma_3-3}} + \sum_{l \leq bT} \frac{\log l}{l^{2\sigma_1+2\sigma_2+2\sigma_3-1}} \\
&\ll T^{4-2\sigma_1-2\sigma_2-2\sigma_3} \log T.
\end{align*}
Then we obtain
\begin{equation}
\label{S_3}
S_3 \ll T^{4-2\sigma_1-2\sigma_2-2\sigma_3} \log T.
\end{equation}

Therefore, by (\ref{S_1 T}), (\ref{S_2}) and (\ref{S_3}), we obtain
\[
\int_2^T \abs{\Sigma_2 (s_1,s_2,s_3)}^2 dt_3 =\zeta_{MT,2}^{[2]} (s_1,s_2,2\sigma_3)T+ O \left( T^{4-2\sigma_1-2\sigma_2-2\sigma_3} \log T \right),
\]
and by (\ref{mt approximation}), we have
\begin{align*}
&\int_2^T \abs{\zeta_{MT,2} (s_1,s_2,s_3) }^2 dt_3 \\
&= \int_2^T \abs{\Sigma_2 (s_1,s_2,s_3) + E(s_1,s_2,s_3)}^2 dt_3 \\
&= \int_2^T \abs{\Sigma_2 (s_1,s_2,s_3)}^2 dt_3 + O \left( \int_2^T \abs{\Sigma_2(s_1,s_2,s_3)E(s_1,s_2,s_3)}dt_3 \right) \\
&+O \left( \int_2^T \abs{E(s_1,s_2,s_3)}^2 dt_3 \right).  
\end{align*}
Here, since $\sigma_1+\sigma_3 \leq 1, \sigma_2+\sigma_3 \leq1$ and $\frac{3}{2} < \sigma_1+\sigma_2+\sigma_3$, we find that $\frac{3}{2} < \sigma_1+\sigma_2+\sigma_3 \leq 2$. Then we have
\begin{align*}
\int_2^T \abs{E(s_1,s_2,s_3)}^2 dt_3 & \ll \begin{cases}
T^{4-2\sigma_1-2\sigma_2-2\sigma_3} & (\frac{3}{2}<\sigma_1+\sigma_2+\sigma_3<2) \\
\log T & (\sigma_1+\sigma_2+\sigma_3=2), \\
\end{cases} 
\end{align*}
and
\begin{align*}
&\int_2^T \abs{\Sigma_2(s_1,s_2,s_3)E(s_1,s_2,s_3)} dt_3 \\
&\ll \begin{cases}
T^{\frac{5}{2}-\sigma_1-\sigma_2-\sigma_3} & (\frac{3}{2}<\sigma_1+\sigma_2+\sigma_3<2) \\
(T \log T)^\frac{1}{2} & (\sigma_1+\sigma_2+\sigma_3=2). \\
\end{cases} 
\end{align*}
Since $4-2\sigma_1-2\sigma_2-2\sigma_3<\frac{5}{2}-\sigma_1-\sigma_2-\sigma_3$, finally we have
\begin{align*}
&\int_2^T \abs{\zeta_{MT,2} (s_1,s_2,s_3) }^2 dt_3 \\
&=\zeta_{MT,2}^{[2]} (s_1,s_2,2\sigma_3)T +\begin{cases}
O ( T^{\frac{5}{2}-\sigma_1-\sigma_2-\sigma_3} ) & (\frac{3}{2}<\sigma_1+\sigma_2+\sigma_3<2) \\
O ( (T \log T)^\frac{1}{2} ) & (\sigma_1+\sigma_2+\sigma_3=2). \\
\end{cases}
\end{align*}
Thus we complete the proof of Theorem \ref{thm:forth thm}.  
\end{proof}

\section*{acknowledgement}
The author would like to thank deeply Dr. Sh\={o}ta Inoue, Dr. Kenta Endo, Dr. Hirotaka Kobayashi and Mr. Kazuma Sakurai for many valuable advice. The author also would like to thank Professor Kohji Matsumoto for his helpful comments.

\end{document}